\title{\bf Malliavin Calculus and Stochastic Differential Equations }
\author{Shizan FANG$^1$\footnote{Corresponding author. Email: Shizan.Fang@u-bourgogne.fr;
{\bf 1.}Institut de Math\'ematiques de Bourgogne, UMR 5584 CNRS,
Universit\'e de Bourgogne Europe, F-21000 Dijon, France}
\quad and\quad Rongrong TIAN $^{1,2}$\footnote{Email: tianrr2018@whut.edu.cn; {\bf 2.}School of Mathematics and Statistics, Wuhan University of Technology, China.
The second author is supported by the China Scholarship Council (Grant No 202306950156)
and National Science Foundation of China (Grant No 11901442). }}
\def\R{\mathbb{R}}
\def\N{\mathbb{N}}
\def\E{\mathbb{E}}
\def\P{\mathbb{P}}
\def\D{{\mathbb D}}
\def\M{\mathcal{M}}
\def\Id{\rm Id}
\def\eps{\varepsilon}
\def\<{\bigl<}
\def\>{\bigr>}
\let \dis=\displaystyle
\let\ra=\rightarrow
\newtheorem{theorem}{Theorem}[section]
\newtheorem{lemma}[theorem]{Lemma}\newtheorem{proposition}[theorem]{Proposition}\newtheorem{remark}[theorem]{Remark}
\begin{document}

\maketitle \makeatletter 
\global\long\def\theequation{\thesection.\arabic{equation}}
 \@addtoreset{equation}{section} \makeatother 


 \vskip 4mm
 \centerline{\bf In memory of Paul Malliavin on the occasion of his 100th Birthday}
 \vskip 8mm

\begin{abstract} This paper is devoted to a study on SDEs with a bounded Borel drift $b$. We first remark that the original integration by parts formula due to P. Malliavin \cite{Malliavin1} can be used
to deal with derivatives with respect to space variables, then we obtain a link between the product of heat kernels and iterated divergences in Malliavin calculus. An explicit estimate
for the derivative of solutions to SDE is obtained in terms of $||b||_\infty$; as a result, we prove that the SDE defines a continuous flow of maps in Sobolev spaces.
\end{abstract}

\vskip 4mm

2020 Mathematics Subject Classification:   60H10, 60H07, 60G17, 60G05

Keywords: Malliavin calculus, SDE, bounded drift, heat kernel, integration by parts

\vskip 6mm







\section{Introduction}\label{sect1}

\quad In this work, we are interested in  the following stochastic differential equation (SDE) on $\R^d$ for $d\in \N^\ast$:

\begin{equation}\label{eq1.1}
dX_t=dW_t + b(X_t)\, dt,\quad X_0=x,
\end{equation}

where $b$, for simplicity of exposition,  is a time-independent bounded Borel function from $\R^d$ to $\R^d$ and $\{W_t; t\geq 0\}$ is a $d$-dimensional standard Brownian motion.
The well-posedness of \eqref{eq1.1} with bounded Borel functions was established in pioneer works \cite{Veretennikov, Zvonkin}.
It is also a classical result (see for example \cite{Kunita}) when $b$ is in  the space $C_b^{1,\alpha}$ of bounded functions of class $C^1$ with bounded derivatives which
are $\alpha$-H\"olderian,
SDE \eqref{eq1.1} defines  a flow of global diffeomorphisms $x\ra X_t(x)$ on $\R^d$; we denote its Jacobian matrix by $\dis Z_t(x)=\partial_xX_t(x)$.
Then $(Z_t)_{t\geq 0}$ solves the following random coefficient linear ordinary differential equation (ODE),

\begin{equation}\label{eq1.2}
dZ_t(x)=(\partial_xb)(X_t(x))\, Z_t(x)\, dt,\quad Z_0(x)=\Id.
\end{equation}

For a matrix $A$ in $\M_d(\R)$,  the space of $d\times d$ matrices of real numbers,
its Hilbert-Schmidt norm is denoted by $||A||_2$. Here is a first glance towards to estimating $Z_t(x)$,

\begin{equation*}
\begin{split}
d||Z_t(x)||_2^2 &= 2 \< Z_t(x), (\partial_x b)(X_t(x))Z_t(x)\>_2\, dt\\
&\leq 2 ||(\partial_x b)(X_t(x))||_2\ ||Z_t(x)||_2^2\, dt.
\end{split}
\end{equation*}
Gronwall lemma yields
\begin{equation*}
||Z_t(x)||_2 \leq \sqrt{d}\, \exp\Bigl(\int_0^t ||(\partial_x b)(X_s(x))||_2\, ds\Bigr).
\end{equation*}

So far,  for any $p\in [1, +\infty[$,
\begin{equation}\label{eq1.3}
\E\Bigl(||Z_t(x)||_2^p\Bigr) \leq d^{p/2}\, \E\Bigl[\exp\Bigl(p\int_0^t ||(\partial_x b)(X_s(x))||_2\, ds\Bigr)\Bigr].
\end{equation}

Therefore looking for moment estimates for $Z_t(x)$, a priori estimate of the right hand side of \eqref{eq1.3} in terms of $||b||_\infty$ plays a key role.
Neverthless it is impossible to utilize integration by parts formula to remove the derivative,
 since the norm $||\cdot||_2$ is involved there. We could consider partial derivatives $\partial_a X_t$ of $x\ra X_t(x)$  along  any fixed vector $a\in \R^d$, to have the ODE
on $\R^d$ :
\begin{equation}\label{eq1.4}
d(\partial_a X_t(x))=(\partial_xb)(X_t(x))\, \partial_a X_t(x)\, dt.
\end{equation}
But solving ODE \eqref{eq1.4} needs to consider the resolvent equation \eqref{eq1.2}.
\vskip 2mm

It is well-known (see \cite{FlandoliGP, ZhangX3}) that if $b$ has a little bit H\"older continuity, then $x\ra X_t(x)$ is a $C^1$-diffeomorphism of $\R^d$.
An important question raises: if the drift $b$ is only a bounded Borel function, is it possible that $X_t$ has Osgood continuity? Namely, for a constant $C>0$,
\begin{equation*}
||X_t(x)-X_t(y)||\leq C\, ||x-y||\ln{\frac{1}{||x-y||}},\quad\textup{for small}\quad ||x-y|| ?
\end{equation*}
 Many classical results remain true under Osgood continuity conditions,
see for example \cite{FangZ}.   Following \cite{Fang2},  if the $L^p$ norm of $\partial_xX_t(x)$ has at most linear growth with respect to $p$,
 then $X_t$ has Osgood continuity. Trying to give a response  motivates the investigation of this work.

\vskip 2mm
For $d=1$, denoting $X_t^\prime(x)$ the derivative with respect to the space variable $x$, we have

\begin{equation*}\label{eq1.5}
dX_t^\prime(x)=b^\prime(X_t(x))X_t^\prime(x)\, dt,
\end{equation*}
which implies 
\begin{equation}\label{eq1.6}
X_t^\prime(x)=e^{\int_0^t b^\prime (X_s(x))\, ds}.
\end{equation}
Note that Formula \eqref{eq1.6} is not  true for $d\geq 2$,  that is to say,
\begin{equation*}
Z_t(x)\neq  \exp\Bigl(\int_0^t (\partial_x b)(X_s(x))\, ds\Bigr).
\end{equation*}
\vskip 2mm

In this paper, we only consider the case $d=1$; the use of \eqref{eq1.6}  will considerably simplify the exposition.

\vskip 2mm

After using  the transformation of Girsanov, we will be concerned with the term
\begin{equation}\label{eq1.7}
\E\Bigl[\Bigl( \int_0^t b^\prime(W_s)\, ds\Bigr)^n\Bigr]=\int_{[0,t]^n} \E\Bigl(b^\prime(W_{s_1})\cdots b^\prime(W_{s_n}) \Bigr)\, ds_1\cdots ds_n.
\end{equation}

Consider the simplex
\begin{equation*}
\Delta_n=\bigl\{ (s_1, \ldots, s_n)\in [0,t]^n\ |\ 0<s_1<\cdots <s_n<t\bigr\},
\end{equation*}
and denote
\begin{equation}\label{eq1.8}
I_n(t)=\int_0^t\!ds_1\!\int_{s_1}^t ds_2\ldots ds_n  \, \E\Bigl(b^\prime(W_{s_1})\cdots b^\prime(W_{s_n}) \Bigr).
\end{equation}

In section \ref{sect2},  we show  that using Malliavin calculus, there is a random variable, says $\Lambda_{s_1\ldots s_n}$, such that
\begin{equation}\label{eq1.8.1}
 \E\Bigl(b^\prime(W_{s_1})\cdots b^\prime(W_{s_n}) \Bigr)= \E\Bigl(b(W_{s_1})\cdots b(W_{s_n})\,\Lambda_{s_1\ldots s_n} \Bigr).
\end{equation}

Unfortunately, as we will remark in  section \ref{sect3},  the function $(s_1, \ldots, s_n)\ra \E\bigl(|\Lambda_{s_1\ldots s_n}|\bigr)$ is not integrable over $\Delta_n$.
Therefore estimating $I_n$ in terms of $||b||_\infty$ is a great challenge.
The following estimate was established in (\cite{Davie} , Proposition 2.2): for a constant $M>0$,
\begin{equation}\label{eq1.9}
|I_n(t)|\leq \frac{M^n}{[n/2]!}\, ||b||_\infty^n\, t^{n/2},\quad n\in \N.
\end{equation}

Many authors in literature said that the proof of above inequality is extremely complicated and sophisticated as well.
This beautiful and powerful inequality is worth of a good understanding, a very suitable description of the constant $M$ in \eqref{eq1.9}
is interesting, maybe helpful to obtain connections with other fields.
\vskip 2mm
In the whole paper, we denote by $q_t(x)$ the heat kernel, that is,
\begin{equation*}\label{eq2.13}
q_t(x)=\frac{e^{-x^2/(2t)}}{\sqrt{2\pi t}},\quad t>0, x\in \R.
\end{equation*}

\vskip 2mm
The purpose of the paper is to try to do Inequality  \eqref{eq1.9} from different points of view. The organisation of the paper is as follows.
In section \ref{sect2}, we present basic elements in Malliavin calculus in order to obtain Formula \eqref{eq1.8.1}. As a byproduct, we obtain
a representation formula for the mixed partial derivative of the product  $\dis q_{s_1}(y_1)q_{s_2-s_1}(y_2-y_1)\cdots q_{s_n-s_{n-1}}(y_n-y_{n-1})$.
Section \ref{sect3} is devoted to a digestion of the general expression
$\Lambda_{s_1\ldots s_n}$ for $n=2$ and $n=3$, the estimate of $I_2(t)$ or $I_3(t)$ is not difficult using the heat equation for $q_t(x)$.
Nevertheless, when $n=4$,  a term involving twice second order derivatives of the heat kernels  in the product appears.
Surprisingly enough, such kind of products were not discussed in \cite{Davie} nor in \cite{Proske5}.  In section \ref{sect4}, we show that it is
difficult to transfer this type of products into other types of products considered in \cite{Davie, Proske5}; but fortunately, a key lemma
in their papers can be used to finally overcome the difficulty. In section \ref{sect5},  in order to get the best rate of decrease in $n$ of $I_n(t)$,
we consider the term only involving the first order derivative of heat kernels in $I_n(t)$, we get an explicit expression of the upper bound constant
which is related to the volume  $v_n$ of the unit ball in $\R^n$.
In section \ref{sect6}, we obtain an explicit estimate in $L^p$ for $X_t^\prime$.  Since the $L^p$ norm of $X_t^\prime$ (see \eqref{eq5.3} and \eqref{eq6.3})
  gives rise an upper bound which is of order $e^{Cp}$, we do not get here the Osgood continuity;
  instead we prove that SDE \eqref{eq1.1} defines a continuous flow
of maps in Sobolev space $W_1^p$. Finally in section \ref{sect7}, we consider $X_t$ as a functional on
the Wiener space, a result on Malliavin derivability is obtained.


\section{Malliavin calculus and products of heat kernels }\label{sect2}

Malliavin Calculus was introduced by P. Malliavin in \cite{Malliavin1} in order to establish an integration by parts formula for the term
\begin{equation*}
\E\Bigl( (\partial_a\varphi)(F)\Bigr)
\end{equation*}
where $\varphi$ is a $C^1$ function on $\R^n$,  the vector $a$ is fixed  in $\R^n$ and $F$ is a non-degenerate functional. 
We refer the readers to some of the very early papers on Malliavin calculus \cite{Kusuoka, KusuokaS1, KusuokaS2, Nualart}. 
For the purpose of this work,
we consider the Wiener space $\Omega$ of real valued Brownian motion $\{W_t; t\in [0,T]\}$ with a fixed $T>0$, that is,
$\dis \Omega=C([0,T], \R)$ endowed with the Wiener measure $\mu$.  From now on, we use dot to denote the derivative with respect to the time $t$.
The Cameron-Martin subspace $H$  of $\Omega$ is as follows

\begin{equation*}
H=\Bigl\{h\in \Omega;\  \int_0^T |\dot h(s)|^2\, ds<+\infty \Bigr\},
\end{equation*}
 equipped with the inner product
 \begin{equation*}
 \bigl< h_1,h_2\bigr>_H=\int_0^T \dot h_1(s)\dot h_2(s)\, ds.
 \end{equation*}

A functional $F: \Omega\ra \R^n$ is said to belong to the Sobolev space $\D_1^p(\Omega, \R^n)$ if the gradient $\nabla F: \Omega\ra H\otimes\R^n$ of $F$ exists in $L^p$ such that
the limit of
\begin{equation*}
\frac{\tau_{\eps h}F-F}{\eps}- \<\nabla F,h\>_H
\end{equation*}
holds in $L^{p-}$ as $\eps\ra 0$, where $\tau_{\eps h}$ denotes the translation by $\eps h$.
We denote also by $D_hF$ the directional derivative of $F$ along $h$, that is,  $D_hF=\<\nabla F, h\>_H$.
Let $G$ be a separable Hilbert space. A $G$-valued functional $Z$ is said in the Sobolev space $\D_1^p(\Omega, G)$ if there exists
$\nabla Z\in L^p(\Omega, H\otimes G)$ such that

\begin{equation*}
\frac{\tau_{\eps h}Z-Z}{\eps}- \<\nabla Z,h\>_H
\end{equation*}
holds in $\dis L^{p-}(\Omega, G)$.  A vector field $Z$ on $\Omega$ is a $H$-valued functional  in some Sobolev space $\D_1^p$.
\vskip 2mm

For a functional $F\in \D_1^p(\Omega, \R^n)$ in the form  $F=(F^1, \ldots, F^n)$, its Malliavin covariant matrix is a $n\times n$ matrix defined by
\begin{equation}\label{eq2.1}
\sigma_F(w) = \Bigl( \<\nabla F^i(w), \nabla F^j(w)\>_H\Bigr)_{1\leq i,j\leq n}.
\end{equation}
 The functional $F$ is said to be non-degenerate if $\dis \sigma_F^{-1}$ exists almost surely  and $\textup{det}\,\sigma_F^{-1}\in L^p(\Omega)$. The
 gradient $\nabla F(w)$ can be seen as an operator $H\ra \R^n$, its adjoint operator $\bigl(\nabla F(w)\bigr)^\ast$ is an operator $\R^n\ra H$ such that
 \begin{equation*}
 \<\bigl(\nabla F(w)\bigr)^\ast\eta, h\>_H=\< D_h F(w), \eta\>_{\R^n},\quad \textup{for any}\ h\in H, \eta\in \R^n.
 \end{equation*}

 For a fixed vector $\eta\in \R^n$, we introduce $h_\eta\in H$ defined by
 \begin{equation}\label{eq2.2}
 h_\eta=\bigl(\nabla F(w)\bigr)^\ast \sigma_F^{-1}(w)\eta.
 \end{equation}

Then we have (see for example \cite{Fang1}),
\begin{equation*}\label{eq2.3}
\bigl( D_{h_\eta}F\bigr)(w)=\eta,
\end{equation*}
and accordingly
\begin{equation}\label{eq2.4}
D_{h_\eta}\bigl( \varphi(F)\bigr)=\varphi^\prime(F)\,D_{h_\eta}F=\varphi^\prime (F)\eta=(\partial_\eta\varphi)(F),
\end{equation}
where $\varphi^\prime$ denotes the differential of $\varphi$.
If $h_\eta$ is in the Sobolev space $\D_1^p(\Omega, H)$ for some $p>1$, then the so-called  divergence of $h_\eta$, the term $\delta(h_\eta)$
exists in $L^P(\Omega)$.
The formula of integration by parts in Malliavin calculus (see \cite{Malliavin2, Watanabe,  Fang1, Nualart}) yields
\begin{equation}\label{eq2.5}
\E\Bigl( \bigl(\partial_\eta \varphi\bigr)(F)\Bigr)
=\E\Bigl( \varphi(F)\delta(h_\eta)\Bigr).
\end{equation}

\vskip 2mm
Let $n\in \N$. By Fubini theorem, we have
\begin{equation*}
\begin{split}
\E\Bigl[ \Bigl(\int_0^t b^\prime(W_s)\, ds\Bigr)^n\Bigr]
&=\E\Bigl( \int_{[0,t]^n}  b^\prime(W_{s_1})\cdots  b^\prime(W_{s_n})\, ds_1\cdots ds_n\Bigr)\\
&= \int_{[0,t]^n}   \E\Bigl(b^\prime(W_{s_1})\cdots  b^\prime(W_{s_n})\Bigr)\, ds_1\cdots ds_n.
\end{split}
\end{equation*}

Consider the functional $F: \Omega\ra \R^n$ defined by
\begin{equation}\label{eq2.6}
F=(W_{s_1}, \cdots, W_{s_n}).
\end{equation}

\begin{proposition}\label{prop2.1}
i)\quad The Malliavin covariance matrix of $F$ is given by
\begin{equation}\label{eq2.7}
\sigma_F=(s_i\wedge s_j)_{1\leq i, j\leq n}.
\end{equation}

ii)\quad Denote $\dis \sigma_F^{-1}=(\sigma_{ij}^{-1})_{1\leq i,j\leq n}$ and $\{e_1, \ldots, e_n\}$ the canonical basis of $\R^n$, then the element $h_j=h_{e_j}$
for $j=1, \ldots, n$ defined in \eqref{eq2.2} admits the expression
\begin{equation}\label{eq2.8}
h_j(\tau)=\sum_{i=1}^n \sigma_{ij}^{-1}\, (s_i\wedge\tau), \quad \tau \in [0,T].
\end{equation}
\end{proposition}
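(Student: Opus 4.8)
The plan is to reduce both parts to a single explicit computation: the Malliavin gradient of each coordinate $W_{s_i}$ of the functional $F$. First I would note that $W_{s_i}(w)=w(s_i)$, so translating by $\eps h$ gives $W_{s_i}(w+\eps h)=w(s_i)+\eps h(s_i)$, and hence the directional derivative is the deterministic quantity $D_hW_{s_i}=h(s_i)$ for every $h\in H$. Because $h(0)=0$ for $h\in H$, one may rewrite $h(s_i)=\int_0^T \ch_{[0,s_i]}(\tau)\,\dot h(\tau)\,d\tau$, which by the definition of the inner product on $H$ equals $\<g_i,h\>_H$ with $g_i(\tau):=s_i\wedge\tau$, so that $\dot g_i=\ch_{[0,s_i]}$. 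Therefore $\nabla W_{s_i}=g_i$, the function $\tau\mapsto s_i\wedge\tau$. In particular $W_{s_i}$ lies in every $\D_1^p$, its gradient being deterministic.

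Part i) then follows immediately by computing the entries of the covariance matrix \eqref{eq2.1}:
\[
\<\nabla W_{s_i},\nabla W_{s_j}\>_H=\int_0^T \ch_{[0,s_i]}(\tau)\,\ch_{[0,s_j]}(\tau)\,d\tau=\int_0^T \ch_{[0,s_i\wedge s_j]}(\tau)\,d\tau=s_i\wedge s_j,
\]
which is exactly \eqref{eq2.7}. For part ii) I would first identify the adjoint $\bigl(\nabla F\bigr)^\ast$ on the canonical basis. Its defining relation gives, for every $h\in H$, $\<(\nabla F)^\ast e_i,h\>_H=\<D_hF,e_i\>_{\R^n}=D_hW_{s_i}=\<\nabla W_{s_i},h\>_H$, whence $(\nabla F)^\ast e_i=\nabla W_{s_i}=g_i$. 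Writing the $j$-th column of $\sigma_F^{-1}$ as $\sigma_F^{-1}e_j=\sum_{i=1}^n\sigma_{ij}^{-1}e_i$ and using linearity of the adjoint, the definition \eqref{eq2.2} of $h_j=h_{e_j}$ yields
\[
h_j=(\nabla F)^\ast\sigma_F^{-1}e_j=\sum_{i=1}^n\sigma_{ij}^{-1}\,(\nabla F)^\ast e_i=\sum_{i=1}^n\sigma_{ij}^{-1}\,g_i,
\]
and evaluating at $\tau$ produces \eqref{eq2.8}.

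The argument is essentially mechanical once $\nabla W_{s_i}$ is in hand, so I do not expect a genuine obstacle; the only point demanding care is to keep straight whether one works in $H$ or in the $L^2$-space of time-derivatives, that is, to identify the gradient with the antiderivative $\tau\mapsto s_i\wedge\tau$ and not with its derivative $\ch_{[0,s_i]}$. For completeness I would also remark that the inverse $\sigma_F^{-1}$ used in part ii) exists precisely when the times $s_1,\dots,s_n$ are distinct and positive, since then $\sigma_F$ is the Gram matrix of the linearly independent indicator functions $\ch_{[0,s_i]}$ and is therefore invertible.
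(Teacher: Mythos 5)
Your proof is correct and follows essentially the same route as the paper: compute $D_hW_{s_i}=h(s_i)=\<\zeta_{s_i},h\>_H$ with $\zeta_{s_i}(\tau)=s_i\wedge\tau$ (your $g_i$), read off the Gram matrix for part i), and apply the adjoint to the columns of $\sigma_F^{-1}$ for part ii). The closing remark on invertibility of $\sigma_F$ for distinct positive times is a small, correct addition not present in the paper's proof.
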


\begin{proof} Let $h\in H$; we compute the derivative of $F$ along $h$, namely $D_hF$, in the following
\begin{equation*}
(D_hF)(w)=\frac{d}{d\eps}_{|_{\eps=0}} F(w+\eps h)=(h_{s_1}, \ldots, h_{s_n}).
\end{equation*}
Let $\zeta_{s_i}\in H$ such that $\dis \dot \zeta_{s_i}(\tau)={\bf 1}_{(\tau<s_i)}$ or
$\dis \zeta_{s_i}(\tau)=\tau\wedge s_i$ for $\tau\in [0,T]$. Then for each $h\in H$, $\dis (D_hF^i)(w)=\<\zeta_{s_i},h\>_H$ so that
\begin{equation*}
(\nabla F^i)(w)=\zeta_{s_i},
\end{equation*}
and
\begin{equation*}
\<\nabla F^i(w), \nabla F^j(w)\>_H=\int_0^T \dot \zeta_{s_i}(\tau)\dot\zeta_{s_j}(\tau)\, d\tau=s_i\wedge s_j.
\end{equation*}
Relation \eqref{eq2.7} follows. Now for $\eta\in \R^n$, we have
\begin{equation*}
\bigl(\nabla F(w)\bigr)^\ast\eta=\sum_{j=1}^n \eta_j \nabla F^j(w)=\sum_{j=1}^n \eta_j \zeta_{s_j}.
\end{equation*}
Then, according to expression \eqref{eq2.2},
\begin{equation*}
h_j=\bigl(\nabla F(w)\bigr)^\ast\sigma_F^{-1}e_j=\sum_{i=1}^n (\sigma_F^{-1}e_j)^i \zeta_{s_i}=\sum_{i=1}^n \sigma_{ij}^{-1}\zeta_{s_i}.
\end{equation*}
We get Expression \eqref{eq2.8}.
\end{proof}

\vskip 2mm
We define now the function $\Phi$ on $\R^n$ by $\Phi(y_1, \ldots, y_n)=b(y_1)\ldots b(y_n)$. Then
\begin{equation*}\label{eq2.9}
\frac{\partial^n\Phi}{\partial{y_1}\ldots\partial{y_n}}(y_1, \ldots, y_n)=b^\prime(y_1)\cdots b^\prime(y_n).
\end{equation*}

Using Relation \eqref{eq2.4}, we have the expression
\begin{equation*}
\frac{\partial^n\Phi}{\partial{y_1}\ldots\partial{y_n}}(F)=D_{h_1}\Bigl( \frac{\partial^{n-1}\Phi}{\partial{y_2}\ldots\partial{y_n}}(F)\Bigr).
\end{equation*}
Therefore applying integration by parts formula \eqref{eq2.5} on the Wiener space $\Omega$, we obtain
\begin{equation*}
\E\Bigl(\frac{\partial^n\Phi}{\partial{y_1}\ldots\partial{y_n}}(F)\Bigr)
=\E\Bigl(\frac{\partial^{n-1}\Phi}{\partial{y_2}\ldots\partial{y_n}}(F)\,\delta(h_1)\Bigr).
\end{equation*}

Using again \eqref{eq2.4},
\begin{equation*}
\begin{split}
\frac{\partial^{n-1}\Phi}{\partial{y_2}\ldots\partial{y_n}}(F)\,\delta(h_1)
&= D_{h_2}\Bigl[ \frac{\partial^{n-2}\Phi}{\partial{y_3}\ldots\partial{y_n}}(F)\Bigr]\,\delta(h_1)\\
&=\< \nabla \Bigl[ \frac{\partial^{n-2}\Phi}{\partial{y_3}\ldots\partial{y_n}}(F)\Bigr],\ \delta(h_1)h_2\>_H,
\end{split}
\end{equation*}
which yields, via again \eqref{eq2.5},
\begin{equation*}
\E\Bigl( \frac{\partial^{n-2}\Phi}{\partial{y_3}\ldots\partial{y_n}}(F)\, \delta\bigl(\delta(h_1)h_2\bigr)\Bigr).
\end{equation*}

Repeatedly in this way, we finally get the following relation
\begin{equation}\label{eq2.10}
\E\Bigl(\frac{\partial^n\Phi}{\partial{y_1}\ldots\partial{y_n}}(F)\Bigr)
=\E\Bigl( \Phi(F)\, \delta\bigl(h_n\delta(h_{n-1}\cdots \delta(h_2\delta(h_1))\cdots\bigr)\Bigr).
\end{equation}

\begin{proposition}\label{prop2.2}
Let
\begin{equation}\label{eq2.11}
 \Lambda_{s_1\ldots s_n}=\delta\bigl(h_n\delta(h_{n-1}\cdots \delta(h_2\delta(h_1))\cdots\bigr).
\end{equation}
Then
\begin{equation}\label{eq2.12}
\E\Bigl[ \Bigl(\int_0^t b^\prime(W_s)\,ds\Bigr)^n\Bigr]=\int_{[0,t]^n}\E\Bigl(b(W_{s_1})\cdots b(W_{s_n})\,  \Lambda_{s_1\ldots s_n}\Bigr)\, ds_1\cdots ds_n.
\end{equation}
\end{proposition}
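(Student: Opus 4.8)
The plan is to combine the Fubini reduction already displayed just before the statement with the iterated integration by parts identity \eqref{eq2.10}. Fubini's theorem turns the left-hand side of \eqref{eq2.12} into $\int_{[0,t]^n}\E\bigl(b^\prime(W_{s_1})\cdots b^\prime(W_{s_n})\bigr)\, ds_1\cdots ds_n$, so it suffices to prove, for almost every tuple $(s_1,\ldots,s_n)$, the pointwise identity
\begin{equation*}
\E\bigl(b^\prime(W_{s_1})\cdots b^\prime(W_{s_n})\bigr)=\E\bigl(b(W_{s_1})\cdots b(W_{s_n})\,\Lambda_{s_1\ldots s_n}\bigr),
\end{equation*}
and then integrate both sides over $[0,t]^n$; the interchange of the $n$-fold time integral with the expectation is the only place Fubini is invoked, and it is justified by the boundedness built into the right-hand side.

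First I would fix a tuple with pairwise distinct coordinates $s_1,\ldots,s_n$, which exhausts a set of full Lebesgue measure in $[0,t]^n$. For such a tuple Proposition \ref{prop2.1} gives $\sigma_F=(s_i\wedge s_j)$, the genuinely nondegenerate covariance matrix of the Gaussian vector $F=(W_{s_1},\ldots,W_{s_n})$, so $F$ is a non-degenerate functional and each $h_j$ of \eqref{eq2.8} is well defined. A useful observation here is that, because $\sigma_F$ and the $\zeta_{s_i}$ are deterministic, each $h_j$ is a \emph{fixed} element of $H$; consequently $\delta(h_1)=\int_0^T\dot h_1(\tau)\,dW_\tau$ is Gaussian and, by the product rule $\delta(Gu)=G\,\delta(u)-D_uG$ for deterministic $u\in H$, the iterated divergence $\Lambda_{s_1\ldots s_n}$ of \eqref{eq2.11} unwinds into a polynomial in the jointly Gaussian variables $\delta(h_i)$ with coefficients given by the inner products $\langle h_i,h_j\rangle_H$. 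This makes every divergence in \eqref{eq2.10} legitimate and places $\Lambda_{s_1\ldots s_n}$ in every $L^p(\Omega)$ for each fixed distinct tuple. Setting $\Phi(y_1,\ldots,y_n)=b(y_1)\cdots b(y_n)$ identifies $b^\prime(W_{s_1})\cdots b^\prime(W_{s_n})$ with the mixed partial $\frac{\partial^n\Phi}{\partial{y_1}\ldots\partial{y_n}}(F)$, so \eqref{eq2.10} — obtained by peeling off one directional derivative at a time through \eqref{eq2.4} and transferring it onto the divergence through \eqref{eq2.5} — yields exactly $\E\bigl(\Phi(F)\,\Lambda_{s_1\ldots s_n}\bigr)$, which is the right-hand side.

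The step I expect to require the most care is the meaning of the left-hand side when $b$ is merely a bounded Borel function, since $b^\prime$ then has no pointwise sense and \eqref{eq2.4} presupposes a differentiable $\varphi$. I would therefore run the chain \eqref{eq2.4}--\eqref{eq2.10} first for $b\in C_b^\infty$, where every term is classical and the $n$-fold integration by parts is unambiguous, and then remove the regularity by approximation: since $\Lambda_{s_1\ldots s_n}$ depends only on $F$ and the deterministic $h_j$ and not on $b$, and lies in all $L^p$, the right-hand side is stable under bounded pointwise approximation of $b$, so the identity extends to every bounded Borel $b$ and, read from right to left, is what gives the distributional expression on the left its rigorous meaning. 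Finally I would stress that the \emph{pointwise} integrability of $\Lambda_{s_1\ldots s_n}$ used above is purely local in the time variables: as Section \ref{sect3} shows, $(s_1,\ldots,s_n)\mapsto\E\bigl(|\Lambda_{s_1\ldots s_n}|\bigr)$ is not integrable over the simplex, so no uniform-in-time bound can be read off from this representation. Establishing \eqref{eq2.12} is thus essentially a bookkeeping combination of the preceding formulas; the real difficulty lies downstream, in controlling the growth of $\Lambda_{s_1\ldots s_n}$ near the diagonal when the time integral is finally carried out.
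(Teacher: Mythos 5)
Your proof follows essentially the same route as the paper: Fubini's theorem to reduce to a fixed tuple $(s_1,\ldots,s_n)$, then the iterated integration by parts identity \eqref{eq2.10} built from \eqref{eq2.4} and \eqref{eq2.5}, which is exactly how the paper derives \eqref{eq2.12}. The extra care you take --- restricting to distinct time points so that $\sigma_F$ is invertible, observing that the $h_j$ are deterministic so $\Lambda_{s_1\ldots s_n}$ is a polynomial in Gaussians lying in every $L^p$, and handling merely bounded Borel $b$ by smooth approximation --- is sound and only makes explicit what the paper leaves implicit.
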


\begin{proof} Relation \eqref{eq2.12} follows from Fubini theorem and Equality \eqref{eq2.10}.
\end{proof}

\vskip 2mm
As a byproduct of above considerations, we get the following representation formula

\begin{theorem}\label{th2.1}  For $0<s_1<\cdots < s_n<t$ and $(y_1, \ldots, y_n)\in \R^n$, denote
\begin{equation}\label{eq2.13}
 Q_{s_1\ldots s_n}(y_1, \ldots, y_n)=q_{s_1}(y_1)q_{s_2-s_1}(y_2-y_1)\cdots q_{s_n-s_{n-1}}(y_n-y_{n-1}).
 \end{equation}

 Then we have
\begin{equation}\label{eq2.14}
\Bigl[\bigl(Q_{s_1\ldots s_n}\bigr)^{-1}\, \frac{\partial^n Q_{s_1\ldots s_n}}{\partial{y_n}\cdot\cdot\partial{y_1}}\Bigr](y_1, \ldots, y_n)
=(-1)^n\E\Bigl(\Lambda_{s_1\ldots s_n}|W_{s_1}=y_1, \ldots,  W_{s_n}=y_n\Bigr),
\end{equation}
where $\Lambda_{s_1\ldots s_n}$ is defined in \eqref{eq2.11}.
\end{theorem}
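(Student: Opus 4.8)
The plan is to test the claimed pointwise identity against smooth functions and to read it off as an identity of densities. First I would record that, for $0<s_1<\cdots<s_n<t$, the function $Q_{s_1\ldots s_n}$ of \eqref{eq2.13} is exactly the joint probability density of the vector $F=(W_{s_1},\ldots,W_{s_n})$ of \eqref{eq2.6}. Indeed, the increments $W_{s_1},\,W_{s_2}-W_{s_1},\ldots,W_{s_n}-W_{s_{n-1}}$ are independent centred Gaussians of variances $s_1,\,s_2-s_1,\ldots,s_n-s_{n-1}$, and the linear change of variables from increments to values has unit Jacobian; hence the law of $F$ admits $Q_{s_1\ldots s_n}$ as density with respect to Lebesgue measure on $\R^n$.

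Next I would observe that the chain of integration by parts producing \eqref{eq2.10} never used the product structure $\Phi(y)=b(y_1)\cdots b(y_n)$: it relied only on \eqref{eq2.4}, that is $D_{h_j}\bigl(\psi(F)\bigr)=(\partial_{y_j}\psi)(F)$, together with the Wiener-space formula \eqref{eq2.5}. Consequently the same computation gives, for every $\varphi\in C_c^\infty(\R^n)$,
\begin{equation*}
\E\Bigl(\frac{\partial^n\varphi}{\partial y_1\cdots\partial y_n}(F)\Bigr)=\E\bigl(\varphi(F)\,\Lambda_{s_1\ldots s_n}\bigr).
\end{equation*}
I would also note that, since $\sigma_F$ in \eqref{eq2.7} is deterministic, each $h_j$ of \eqref{eq2.8} is a deterministic element of $H$, so $\Lambda_{s_1\ldots s_n}$ is a polynomial of degree at most $n$ in the jointly Gaussian Wiener integrals $\delta(h_j)$; it therefore lies in every $L^p(\Omega)$ and its conditional expectation given $F$ is well defined.

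Now I would evaluate both sides as integrals over $\R^n$ against the density $Q_{s_1\ldots s_n}$. On the left, writing the expectation as an integral and integrating by parts once in each of the $n$ variables transfers all derivatives from $\varphi$ onto $Q_{s_1\ldots s_n}$ and produces a factor $(-1)^n$:
\begin{equation*}
\E\Bigl(\frac{\partial^n\varphi}{\partial y_1\cdots\partial y_n}(F)\Bigr)=(-1)^n\int_{\R^n}\varphi(y)\,\frac{\partial^n Q_{s_1\ldots s_n}}{\partial y_1\cdots\partial y_n}(y)\,dy.
\end{equation*}
On the right, the tower property of conditional expectation yields
\begin{equation*}
\E\bigl(\varphi(F)\,\Lambda_{s_1\ldots s_n}\bigr)=\int_{\R^n}\varphi(y)\,\E\bigl(\Lambda_{s_1\ldots s_n}\mid F=y\bigr)\,Q_{s_1\ldots s_n}(y)\,dy.
\end{equation*}
Since $\varphi$ is arbitrary, the two integrands coincide for almost every $y$, hence for every $y$ by continuity of both sides; dividing by the strictly positive factor $Q_{s_1\ldots s_n}(y)$ gives \eqref{eq2.14}, the order of differentiation being immaterial because mixed partials commute.

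The routine points to discharge are the vanishing of the boundary terms in the $n$ integrations by parts, guaranteed by the Gaussian decay of $Q_{s_1\ldots s_n}$ and its derivatives together with the compact support of $\varphi$, and the passage from ``almost every $y$'' to ``every $y$'', which follows from the smoothness of both members. The only genuinely delicate step is the extension of \eqref{eq2.10} to arbitrary test functions and the attendant integrability of $\Lambda_{s_1\ldots s_n}$; as noted above this is harmless here precisely because the $h_j$ are deterministic, so no non-degeneracy or moment hypothesis beyond what is already available is required.
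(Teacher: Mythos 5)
Your proposal is correct and follows essentially the same route as the paper: apply the iterated Malliavin integration by parts to a test function to get $\E\bigl(\partial^n\varphi(F)\bigr)=\E\bigl(\varphi(F)\,\Lambda_{s_1\ldots s_n}\bigr)$, rewrite both sides as integrals against the density $Q_{s_1\ldots s_n}$ (classical integration by parts on one side, conditioning on the other), and identify the integrands. The only cosmetic difference is that you work with general $\varphi\in C_c^\infty(\R^n)$ after observing the chain needs only \eqref{eq2.4}--\eqref{eq2.5}, whereas the paper tests against tensor products $v_1\otimes\cdots\otimes v_n$ and invokes their density.
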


\begin{proof} Let $v_1, \ldots, v_n\in C_c^1(\R)$ be functions on $\R$ with compact support; the totality
of tensor products $v_1\otimes \cdots \otimes v_n$  of such functions
is dense in  $L^p(\R^n)$.  Define $\dis \Psi(y_1, \ldots,  y_n)=v_1(y_1)\cdots v_n(y_n)$, we have
\begin{equation*}
\frac{\partial^n\Psi}{\partial{y_1}\ldots\partial{y_n}}(y_1, \ldots, y_n)=v_1^\prime(y_1)\cdots v_n^\prime(y_n).
\end{equation*}
Then proceeding as above, we have on one hand,
\begin{equation*}
\begin{split}
&\E\Bigl( \frac{\partial^n\Psi}{\partial{y_1}\ldots\partial{y_n}}(W_{s_1}, \cdots, W_{s_n})\Bigr)
= \E\Bigl( \Psi(W_{s_1}, \cdots, W_{s_n})\,\Lambda_{s_1\ldots s_n}\Bigr)\\
&=\int_{\R^n} \Psi(y_1, \ldots,  y_n)\E\bigl(\Lambda_{s_1\ldots s_n}|W_{s_1}=y_1,\cdots, W_{s_n}=y_n\bigr)  Q_{s_1\ldots s_n}(y_1, \ldots, y_n)\,dy_1\cdots dy_n,
\end{split}
\end{equation*}
since the law of $\dis (W_{s_1}, \cdots, W_{s_n})$ has $\dis  Q_{s_1\ldots s_n}$ as density.  On the other hand,

\begin{equation*}
\E\Bigl( \frac{\partial^n\Psi}{\partial{y_1}\ldots\partial{y_n}}(W_{s_1}, \cdots, W_{s_n})\Bigr)
=\int_{\R^n}  \frac{\partial^n\Psi}{\partial{y_1}\ldots\partial{y_n}}\, Q_{s_1\ldots s_n}\, dy_1\cdots dy_n,
\end{equation*}
which leads, via $n$ times integration by parts, to
\begin{equation*}
(-1)^n \int_{\R^n} \Psi\,  \frac{\partial^n Q_{s_1\ldots s_n}}{\partial{y_1}\ldots\partial{y_n}}\, dy_1\cdots dy_n.
\end{equation*}
The result \eqref{eq2.14} follows.
\end{proof}

\section{Digestion for $n=2$ and $n=3$}\label{sect3}

In this part, we do some explicit computations for cases $n=2$ and $n=3$ in order to get feelings about the term $\dis\Lambda_{s_1\ldots s_n}$.
  We recall formulae in Malliavin calculus alongside where they are needed.
\vskip 2mm

For $n=2$, consider $0<s_1<s_2<T$. In this case, the Malliavin covariance matrix $\sigma_F$ of $F=(W_{s_1}, W_{s_2})$
is $\dis \sigma_F=\left(\begin{array}{cc}s_1&s_1\\s_1&s_2\end{array}\right)$, the inverse $\sigma_F^{-1}$ of $\sigma_F$ is given by

\begin{equation}\label{eq3.1}
\sigma_F^{-1}=\left( \begin{array}{cc} \sigma_{11}^{-1}&\ \sigma_{12}^{-1}\\
\\
							\sigma_{21}^{-1}&\ \sigma_{22}^{-1}\end{array}\right)
		= \left( \begin{array}{cc} \frac{s_2}{s_1(s_2-s_1)}&\ \frac{-1}{s_2-s_1}\\
		\\
		\frac{-1}{s_2-s_1}&\ \frac{1}{s_2-s_1}\end{array}\right).
	\end{equation}

By \eqref{eq2.8}, we have
\begin{equation}\label{eq3.2}
h_1(\tau)=\sigma_{11}^{-1}\, (s_1\wedge\tau) + \sigma_{21}^{-1}\, (s_2\wedge\tau)\quad \textup{and}\quad
h_2(\tau)=\sigma_{12}^{-1}\, (s_1\wedge\tau) + \sigma_{22}^{-1}\, (s_2\wedge\tau).
\end{equation}

The divergence $\delta(h)$ of $h\in H$ is It\^o stochastic integral of $\dot h$, according to \eqref{eq3.2},  we have

\begin{equation}\label{eq3.3}
\delta(h_1) =\sigma_{11}^{-1}\, W_{s_1}+ \sigma_{21}^{-1}\, W_{s_2}\quad \textup{and} \quad
\delta(h_2)=\sigma_{12}^{-1}\, W_{s_1} + \sigma_{22}^{-1}\, W_{s_2}.
\end{equation}

Let $Z_1, Z_2$ be two vector fields on the Wiener space $\Omega$, the following formula (see \cite{Malliavin2, Fang1})
holds:

\begin{equation}\label{eq3.4}
\delta\bigl( Z_2\,\delta(Z_1)\bigr)=\delta(Z_2)\delta(Z_1)-D_{Z_2}\delta(Z_1).
\end{equation}
For $Z_2=h_2, Z_1=h_1$, we have
\begin{equation}\label{eq3.5}
D_{h_2}\delta(h_1)=\<h_1, h_2\>_H.
\end{equation}

Using expression \eqref{eq3.2}, we have
\begin{equation*}
\<h_1,h_2\>_H=\bigl( \sigma_{11}^{-1}\sigma_{12}^{-1}+\sigma_{11}^{-1}\sigma_{22}^{-1}+\sigma_{12}^{-1}\sigma_{21}^{-1}\bigr)\, s_1
+\sigma_{21}^{-1}\sigma_{22}^{-1}\, s_2,
\end{equation*}
as well as, by \eqref{eq3.3},
\begin{equation*}
\delta(h_1)\delta(h_2)= \sigma_{11}^{-1}\sigma_{12}^{-1}\, W_{s_1}^2+\bigl(\sigma_{11}^{-1}\sigma_{22}^{-1}+\sigma_{12}^{-1}\sigma_{21}^{-1}\bigr)\, W_{s_1}W_{s_2}
+\sigma_{21}^{-1}\sigma_{22}^{-1}\, W_{s_2}.
\end{equation*}

Combining these formulae, we get
\begin{equation}\label{eq3.6}
\begin{split}
\delta\bigl(h_2\delta(h_1)\bigr)=  \sigma_{11}^{-1}\sigma_{12}^{-1}\, (W_{s_1}^2-s_1)
&+\bigl(\sigma_{11}^{-1}\sigma_{22}^{-1}+\sigma_{12}^{-1}\sigma_{21}^{-1}\bigr)\, \bigl(W_{s_1}W_{s_2}-s_1\bigr)\\
&+\sigma_{21}^{-1}\sigma_{22}^{-1}\, (W_{s_2}-s_2).
\end{split}
\end{equation}

Using expressions in \eqref{eq3.1}, we have
\begin{equation*}
 \sigma_{11}^{-1} \sigma_{12}^{-1}=-\frac{s_2}{s_1(s_2-s_1)}, \
 \sigma_{12}^{-1} \sigma_{21}^{-1}+ \sigma_{11}^{-1} \sigma_{22}^{-1}=\frac{1}{(s_2-s_1)^2}+\frac{s_2}{s_1(s_2-s_1)},
\end{equation*}
and
\begin{equation*}
 \sigma_{21}^{-1} \sigma_{22}^{-1}=-\frac{1}{(s_2-s_1)^2}.
\end{equation*}
Therefore for $n=2$, we have

\begin{equation*}\label{eq3.7}
\Lambda_{s_1s_2}=-\frac{s_2(W_{s_1}^2-s_1)}{s_1(s_2-s_1)^2}+\Bigl[ \frac{1}{(s_2-s_1)^2}+\frac{s_2}{s_1(s_2-s_1)}\Bigr]\bigl(W_{s_1}W_{s_2}-s_1\bigr)
- \frac{W_{s_2}^2-s_2}{(s_2-s_1)^2}.
\end{equation*}


In the sequel, taking account of independence of increments of the Brownian motion,
we express $\dis \Lambda_{s_1s_2}$ in the following

\begin{proposition}\label{prop3.1} We have
\begin{equation}\label{eq3.8}
\Lambda_{s_1s_2}=\frac{W_{s_1}(W_{s_2}-W_{s_1})}{s_1(s_2-s_1)}-\Bigl(\frac{(W_{s_2}-W_{s_1})^2}{(s_2-s_1)^2}-\frac{1}{s_2-s_1}\Bigr).
\end{equation}
\end{proposition}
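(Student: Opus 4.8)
\emph{The plan.} The explicit polynomial expression for $\Lambda_{s_1s_2}$ has already been assembled just above the statement out of formula \eqref{eq3.6} and the entries of $\sigma_F^{-1}$ in \eqref{eq3.1}, so nothing is needed beyond an algebraic rearrangement. The guiding idea, suggested by the independence of the increments of $W$, is to pass from the pair $(W_{s_1},W_{s_2})$ to the pair $(W_{s_1},\xi)$ with $\xi:=W_{s_2}-W_{s_1}$, whose two components are independent centred Gaussians of variances $s_1$ and $s_2-s_1$. Writing $W_{s_2}=W_{s_1}+\xi$ turns the three quadratic monomials present in $\Lambda_{s_1s_2}$ into
\begin{equation*}
W_{s_1}^2,\qquad W_{s_1}W_{s_2}=W_{s_1}^2+W_{s_1}\xi,\qquad W_{s_2}^2=W_{s_1}^2+2\,W_{s_1}\xi+\xi^2 .
\end{equation*}

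First I would substitute the products $\sigma_{11}^{-1}\sigma_{12}^{-1}=-s_2/[s_1(s_2-s_1)^2]$, $\sigma_{11}^{-1}\sigma_{22}^{-1}+\sigma_{12}^{-1}\sigma_{21}^{-1}=(s_1+s_2)/[s_1(s_2-s_1)^2]$ and $\sigma_{21}^{-1}\sigma_{22}^{-1}=-1/(s_2-s_1)^2$ read off from \eqref{eq3.1} into \eqref{eq3.6}, and then replace $W_{s_2}$ by $W_{s_1}+\xi$ everywhere. After this the whole expression becomes a quadratic form in $(W_{s_1},\xi)$ plus a deterministic constant, and the proof reduces to collecting the coefficients of the four terms $W_{s_1}^2$, $W_{s_1}\xi$, $\xi^2$ and $1$.

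The decisive point, and essentially the only thing one must check with care, is that the coefficient of $W_{s_1}^2$ vanishes: the two contributions coming from the first two monomials add up to $\bigl(-s_2+(s_1+s_2)\bigr)/[s_1(s_2-s_1)^2]=1/(s_2-s_1)^2$, which is exactly cancelled by the contribution $-1/(s_2-s_1)^2$ coming from $W_{s_2}^2$. Once this cancellation is granted, the coefficient of $W_{s_1}\xi$ collapses to $(s_2-s_1)/[s_1(s_2-s_1)^2]=1/[s_1(s_2-s_1)]$, the coefficient of $\xi^2$ is $-1/(s_2-s_1)^2$, and the deterministic part, gathered from the ``$-s_1$'' and ``$-s_2$'' remainders of the three monomials, reduces to $(s_2-s_1)/(s_2-s_1)^2=1/(s_2-s_1)$. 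Reassembling these four pieces yields exactly \eqref{eq3.8}. The main obstacle is thus purely one of bookkeeping; no probabilistic input beyond the change of variables to independent increments enters, and it is precisely the cancellation of the $W_{s_1}^2$ term that accounts for the striking simplification over the raw expression for $\Lambda_{s_1s_2}$.
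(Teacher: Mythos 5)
Your proposal is correct and follows essentially the same route as the paper: both substitute $W_{s_2}=W_{s_1}+(W_{s_2}-W_{s_1})$ into \eqref{eq3.6} and identify the coefficients of $W_{s_1}^2$ (which vanishes), $W_{s_1}(W_{s_2}-W_{s_1})$, $(W_{s_2}-W_{s_1})^2$ and the constant term, arriving at the same values. The bookkeeping you carried out is accurate, so nothing is missing.
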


\begin{proof} It is sufficient to identify coefficients of $W_{s_1}^2, (W_{s_2}-W_{s_1})W_{s_1}$ and $(W_{s_2}-W_{s_1})^2$ from \eqref{eq3.6}.
The coefficient of $W_{s_1}^2$ is
\begin{equation*}
 \sigma_{11}^{-1}\sigma_{12}^{-1}+\sigma_{12}^{-1}\sigma_{21}^{-1}+\sigma_{11}^{-1}\sigma_{22}^{-1}
 +\sigma_{21}^{-1}\sigma_{22}^{-1}=0.
\end{equation*}
The coefficient of $W_{s_1}(W_{s_2}-W_{s_1)}$ is
\begin{equation*}
\sigma_{12}^{-1}\sigma_{21}^{-1}+\sigma_{11}^{-1}\sigma_{22}^{-1}+2\sigma_{21}^{-1}\sigma_{22}^{-1}
=\frac{1}{s_1(s_2-s_1)},
\end{equation*}
while the coefficient of $(W_{s_2}-W_{s_1})^2$ is
\begin{equation*}
\sigma_{21}^{-1}\sigma_{22}^{-1}=-\frac{1}{(s_2-s_1)^2}.
\end{equation*}
The coefficient of $s_1$ is the same as the coefficient of $W_{s_1}^2$, which is equal to $0$; in the same way, the coefficient of $s_2-s_1$
is
\begin{equation*}
-\sigma_{21}^{-1}\sigma_{22}^{-1}=\frac{1}{(s_2-s_1)^2}.
\end{equation*}
Therefore we obtain \eqref{eq3.8} from \eqref{eq3.6}.
\end{proof}

\vskip 2mm

For $n=3$, consider $0<s_1<s_2<s_3<T$ and $\dis F=(W_{s_1}, W_{s_2}, W_{s_3})$. The Malliavin covariance matrix $\sigma_F$ of $F$ is given by
\begin{equation*}
\sigma_F=\left(\begin{array}{ccc} s_1\ &s_1\ &s_1\\
s_1\ &s_2\ &s_2
\\s_1\ &s_2\ &s_3\end{array}\right).
\end{equation*}
The inverse $\sigma_F^{-1}$ of $\sigma_F$ admits the expression
\begin{equation*}\label{eq3.9}
\sigma_F^{-1}=\left(\begin{array}{ccc} \sigma_{11}^{-1}&\ \sigma_{12}^{-1}&\ \sigma_{13}^{-1}\\
\\
\sigma_{21}^{-1}&\ \sigma_{22}^{-1}&\ \sigma_{23}^{-1}\\
\\
\sigma_{31}^{-1}&\ \sigma_{32}^{-1}&\ \sigma_{33}^{-1}\end{array}\right)
=\left(\begin{array}{ccc}\frac{s_2}{s_1(s_2-s_1)}&\ \frac{-1}{s_2-s_1}&\ 0\\
\\
\frac{-1}{s_2-s_1}&\ \frac{s_3-s_1}{(s_2-s_1)(s_3-s_2)}&\ \frac{-1}{s_3-s_2}\\
\\
0&\ \frac{-1}{s_3-s_2}&\ \frac{1}{s_3-s_2}\end{array}\right).
\end{equation*}

By \eqref{eq2.8}, we have for $j=1, 2, 3$,
\begin{equation}\label{eq3.10}
h_j(\tau)=\sum_{i=1}^3 \sigma_{ij}^{-1}\, (s_i\wedge\tau),
\end{equation}
and
\begin{equation}\label{eq3.11}
\delta (h_j) =\sum_{i=1}^3 \sigma_{ij}^{-1}\, W_{s_i}.
\end{equation}

Using \eqref{eq3.4} and \eqref{eq3.5}, we have
\begin{equation*}
\delta\bigl(h_2\delta(h_1)\bigr)=\delta(h_2)\delta(h_1)-\<h_1,h_2\>_H.
\end{equation*}

Again by \eqref{eq3.4} and \eqref{eq3.5}, we have
\begin{equation*}
\Lambda_{s_1s_2s_3}=\delta(h_1)\delta(h_2)\delta(h_3)-\delta(h_3)\<h_1,h_2\>_H-D_{h_3}\bigl( \delta(h_1)\delta(h_2)\bigr).
\end{equation*}
Note that
\begin{equation*}
D_{h_3}\bigl( \delta(h_1)\delta(h_2)\bigr)=\<h_1,h_3\>_H\delta(h_2)+\<h_2,h_3\>_H\delta(h_1).
\end{equation*}

Finally using \eqref{eq3.10} and \eqref{eq3.11}, we get
\begin{equation*}
\begin{split}
\<h_1,h_2\>_H\delta(h_3)&=\< \sum_{i=1}^3\sigma_{i1}^{-1}(s_i\wedge \tau), \sum_{i=1}^3\sigma_{i2}^{-1}(s_i\wedge \tau)\>_H\,
\bigl(\sum_{i=1}^3\sigma_{i3}^{-1}\  W_{s_i}\bigr)\\
&=\sum_{i_1,i_2, i_3=1}^3 \bigl( \sigma_{i_11}^{-1}\sigma_{i_22}^{-1}\sigma_{i_33}^{-1}\bigr)\, (s_{i_1}\wedge s_{i_2})\, W_{s_{i_3}}.
\end{split}
\end{equation*}
This term does not change under permutation $1\ra 2\ra 3\ra 1$.
\vskip 2mm

\begin{proposition}\label{prop3.2} The term $\Lambda_{s_1s_2s_3}$ admits the following expression
\begin{equation}\label{eq3.12}
\Lambda_{s_1s_2s_3}=\sum_{i_1,i_2, i_3=1}^3\bigl( \sigma_{i_11}^{-1}\sigma_{i_22}^{-1}\sigma_{i_33}^{-1}\bigr)
\Bigl(W_{s_{i_1}}W_{s_{i_2}}W_{s_{i_3}}-3\bigl(s_{i_1}\wedge s_{i_2}\bigr) W_{s_{i_3}}\Bigr).
\end{equation}
\end{proposition}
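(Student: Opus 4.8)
The plan is to assemble the identity for $\Lambda_{s_1s_2s_3}$ that has in effect already been built in the lines preceding the statement. Combining $\delta(h_2\delta(h_1))=\delta(h_2)\delta(h_1)-\langle h_1,h_2\rangle_H$ with \eqref{eq3.4}, the commutation relation $D_{h_3}\delta(h_j)=\langle h_j,h_3\rangle_H$ (the analogue of \eqref{eq3.5}), and the fact that $\langle h_1,h_2\rangle_H$ is deterministic, one obtains
\[
\Lambda_{s_1s_2s_3}=\delta(h_1)\delta(h_2)\delta(h_3)-\langle h_1,h_2\rangle_H\,\delta(h_3)-\langle h_1,h_3\rangle_H\,\delta(h_2)-\langle h_2,h_3\rangle_H\,\delta(h_1).
\]
Conceptually this says that $\Lambda_{s_1s_2s_3}$ is the third Wick power of the jointly Gaussian family $\delta(h_1),\delta(h_2),\delta(h_3)$, whose covariances are the $\langle h_j,h_k\rangle_H$; this is the structural reason why the product appears minus the sum of its pairwise contractions, and it is the starting point I would quote rather than re-derive.

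Next I would insert the linear expression \eqref{eq3.11}, $\delta(h_j)=\sum_i\sigma_{ij}^{-1}W_{s_i}$, into the cubic term, which at once yields $\sum_{i_1,i_2,i_3}\sigma_{i_11}^{-1}\sigma_{i_22}^{-1}\sigma_{i_33}^{-1}\,W_{s_{i_1}}W_{s_{i_2}}W_{s_{i_3}}$, the first half of \eqref{eq3.12}. For the three cross terms I would expand each $\langle h_j,h_k\rangle_H\,\delta(h_l)$ by means of \eqref{eq3.10}--\eqref{eq3.11} and $\langle\zeta_{s_i},\zeta_{s_l}\rangle_H=s_i\wedge s_l$, so that all three carry the common coefficient $\sigma_{i_11}^{-1}\sigma_{i_22}^{-1}\sigma_{i_33}^{-1}$ and the total cross contribution reads
\[
\sum_{i_1,i_2,i_3}\sigma_{i_11}^{-1}\sigma_{i_22}^{-1}\sigma_{i_33}^{-1}\bigl[(s_{i_1}\wedge s_{i_2})W_{s_{i_3}}+(s_{i_1}\wedge s_{i_3})W_{s_{i_2}}+(s_{i_2}\wedge s_{i_3})W_{s_{i_1}}\bigr].
\]

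The step I expect to be the main obstacle, and the one that needs real care, is the passage from this three-pairing expression to the single factor-$3$ term written in \eqref{eq3.12}. The three individual pairing sums are \emph{not} equal as random variables — for example $\langle h_1,h_3\rangle_H=\sigma_{13}^{-1}=0$ forces one of them to vanish — so the collapse cannot be performed term by term. What does go through is to observe that the bracket above is fully symmetric under all permutations of $(i_1,i_2,i_3)$, exactly as is the monomial $W_{s_{i_1}}W_{s_{i_2}}W_{s_{i_3}}$; since both are contracted against $\sigma_{i_11}^{-1}\sigma_{i_22}^{-1}\sigma_{i_33}^{-1}$, only the symmetrization of that coefficient over the summation indices contributes. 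Once the coefficient is symmetrized the three pairings coincide — this is precisely the invariance under $1\to2\to3\to1$ recorded before the statement — and the bracket reduces to $3(s_{i_1}\wedge s_{i_2})W_{s_{i_3}}$, which together with the cubic term gives \eqref{eq3.12}. I would therefore carry out the combination at the level of the full index sum, contracting against symmetric tensors, rather than asserting equality of the separate cross terms.
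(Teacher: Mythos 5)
Your derivation of
\[
\Lambda_{s_1s_2s_3}=\delta(h_1)\delta(h_2)\delta(h_3)-\langle h_1,h_2\rangle_H\,\delta(h_3)-\langle h_1,h_3\rangle_H\,\delta(h_2)-\langle h_2,h_3\rangle_H\,\delta(h_1)
\]
is exactly the paper's (iterate \eqref{eq3.4} and \eqref{eq3.5}), and you are right that the entire content of the proposition is the passage from the three pairing terms to the single term carrying the factor $3$. Your observation that the three pairings are distinct random variables --- $\langle h_1,h_3\rangle_H=\sigma_{13}^{-1}=0$ while $\langle h_1,h_2\rangle_H=\sigma_{12}^{-1}\neq 0$ --- is correct, and it is precisely the step the paper disposes of with the unproved sentence ``this term does not change under permutation $1\to 2\to 3\to 1$''.

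The gap is in your last step. Symmetrizing the coefficient $c_{i_1i_2i_3}=\sigma_{i_11}^{-1}\sigma_{i_22}^{-1}\sigma_{i_33}^{-1}$ is harmless only when $c$ is contracted against a fully symmetric array; the full bracket and the monomial $W_{s_{i_1}}W_{s_{i_2}}W_{s_{i_3}}$ are fully symmetric, but the single term $(s_{i_1}\wedge s_{i_2})W_{s_{i_3}}$ is symmetric only in $(i_1,i_2)$. Hence, writing $\tilde c$ for the symmetrization, what your argument actually proves is
\[
\Lambda_{s_1s_2s_3}=\sum_{i_1,i_2,i_3=1}^3\tilde c_{i_1i_2i_3}\Bigl(W_{s_{i_1}}W_{s_{i_2}}W_{s_{i_3}}-3\bigl(s_{i_1}\wedge s_{i_2}\bigr)W_{s_{i_3}}\Bigr),
\]
and you cannot then replace $\tilde c$ by $c$ in the correction term: $3\sum_i c_{i_1i_2i_3}(s_{i_1}\wedge s_{i_2})W_{s_{i_3}}=3\langle h_1,h_2\rangle_H\delta(h_3)$, which is not the sum of the three pairings. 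Concretely, for $(s_1,s_2,s_3)=(1,2,3)$ the true pairing contribution is $\sigma_{12}^{-1}\delta(h_3)+\sigma_{23}^{-1}\delta(h_1)=-2W_{s_1}+2W_{s_2}-W_{s_3}$, whereas $3\sigma_{12}^{-1}\delta(h_3)=3W_{s_2}-3W_{s_3}$. So \eqref{eq3.12} as printed is not an identity of random variables, your final sentence ``which together with the cubic term gives \eqref{eq3.12}'' does not follow, and the correct statement requires either the symmetrized coefficient or the bracket $(s_{i_1}\wedge s_{i_2})W_{s_{i_3}}+(s_{i_2}\wedge s_{i_3})W_{s_{i_1}}+(s_{i_1}\wedge s_{i_3})W_{s_{i_2}}$ in place of $3(s_{i_1}\wedge s_{i_2})W_{s_{i_3}}$. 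The same objection applies to the paper's own one-line proof; your analysis in fact exposes the flaw, but your write-up then covers it over again in the concluding sentence.
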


\begin{proof} Developing the product $\delta(h_1)\delta(h_2)\delta(h_3)$ gives the first sum in \eqref{eq3.12}.
\end{proof}

\vskip 2mm
We see here that $\Lambda_{s_1s_2s_3}$ is a polynomial  of $W_{s_1}, W_{s_2},  W_{s_3}$ of degree $3$.
Having Formula \eqref{eq2.12} in hand, one asks whether the term $\dis \E(|\Lambda_{s_1\ldots s_n}|)$ is integrable over the simplex
\begin{equation}\label{eq3.13}
\Delta_n=\bigl\{ (s_1, \ldots, s_n)\in [0,t]^n|\ 0<s_1<\cdots <s_n<t\bigr\}.
\end{equation}

Unfortunately, it is not true. We illustrate this point in the following for $n=2$.

\begin{proposition} We have, for $0<s_1<s_2<t$,
\begin{equation*}
\int_0^t ds_1\int_{s_1}^t \E(|\Lambda{s_1s_2}|)\, ds_2=+\infty.
\end{equation*}
\end{proposition}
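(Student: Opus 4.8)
The plan is to exploit the explicit formula \eqref{eq3.8} and isolate the summand that is singular on the diagonal $s_2=s_1$. First I would pass to independent increments: writing $W_{s_1}=\sqrt{s_1}\,\xi$ and $W_{s_2}-W_{s_1}=\sqrt{s_2-s_1}\,\eta$, where $\xi,\eta$ are independent standard Gaussian variables, formula \eqref{eq3.8} becomes
\begin{equation*}
\Lambda_{s_1s_2}=\frac{\xi\,\eta}{\sqrt{s_1(s_2-s_1)}}-\frac{\eta^2-1}{s_2-s_1}.
\end{equation*}
The crucial observation is that the second summand is of order $(s_2-s_1)^{-1}$ while the first is only of order $(s_1(s_2-s_1))^{-1/2}$, hence negligible compared to it as $s_2\downarrow s_1$ with $s_1$ fixed.

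Next I would produce a lower bound for $\E(|\Lambda_{s_1s_2}|)$ by the reverse triangle inequality together with the independence of $\xi$ and $\eta$:
\begin{equation*}
\E\bigl(|\Lambda_{s_1s_2}|\bigr)\geq \frac{\E|\eta^2-1|}{s_2-s_1}-\frac{\E|\xi|\,\E|\eta|}{\sqrt{s_1(s_2-s_1)}}
=\frac{c_1}{s_2-s_1}-\frac{c_2}{\sqrt{s_1(s_2-s_1)}},
\end{equation*}
with $c_1=\E|\eta^2-1|>0$ and $c_2=(\E|\eta|)^2=2/\pi$. A one-line computation then shows that whenever $s_2-s_1\leq (c_1^2/4c_2^2)\,s_1$ the subtracted cross term is at most half of the leading term, so that $\E(|\Lambda_{s_1s_2}|)\geq c_1/\bigl(2(s_2-s_1)\bigr)$ on this near-diagonal region.

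Finally I would integrate. For each fixed $s_1\in(0,t)$ the set of admissible $s_2$ contains an interval $(s_1,s_1+\delta)$ with $\delta=\min\bigl((c_1^2/4c_2^2)\,s_1,\ t-s_1\bigr)>0$, and on it
\begin{equation*}
\int_{s_1}^{s_1+\delta}\E\bigl(|\Lambda_{s_1s_2}|\bigr)\,ds_2\geq \frac{c_1}{2}\int_{s_1}^{s_1+\delta}\frac{ds_2}{s_2-s_1}=+\infty,
\end{equation*}
the divergence being the logarithmic one of $\int du/u$ at the diagonal $s_2\downarrow s_1$. Thus the inner integral is already $+\infty$ for every $s_1\in(0,t)$, so a fortiori the double integral is $+\infty$. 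The only genuinely delicate point is the comparison of the two orders of singularity; once the decomposition into $\xi,\eta$ makes the diagonal term explicit, the reverse triangle inequality does the rest and the non-integrability reduces to the divergence of $\int du/u$.
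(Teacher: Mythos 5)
Your proposal is correct and follows essentially the same route as the paper: both isolate the second summand of \eqref{eq3.8}, recognize it as $(\eta^2-1)/(s_2-s_1)$ with $\eta$ standard Gaussian independent of $W_{s_1}$, note that the cross term is only of the integrable order $(s_1(s_2-s_1))^{-1/2}$, and conclude from the non-integrability of $1/(s_2-s_1)$ near the diagonal. The only (harmless) difference is that you make the triangle-inequality comparison pointwise on a near-diagonal region so that the inner $ds_2$-integral already diverges for each fixed $s_1$, whereas the paper integrates the two terms separately over the whole simplex.
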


\begin{proof} Let $\dis G=\frac{W_{s_2}-W_{s_1}}{\sqrt{s_2-s_1}}$, that is a standard Gaussian random variable. It is obvious that
the first term in \eqref{eq3.8} is integrable, while the second term is equal to
\begin{equation*}
\frac{1}{s_2-s_1} \bigl( G^2-1\bigr).
\end{equation*}
We see that
\begin{equation*}
\int_0^t ds_1\int_{s_1}^t \frac{ds_2}{s_2-s_1}\E(|G^2-1|) =+\infty,
\end{equation*}
which gives the desired result.
\end{proof}

Therefore the estimate of the right hand side of \eqref{eq2.12} is delicate over the simplex $\Delta_n$, as what we show
in the sequel.
\vskip 2mm

For $n\in \N^\ast$ and $0<s_1<\cdots <s_n<t$, by \eqref{eq2.12}, we have 
\begin{equation}\label{eq3.14}
I_n(t)=\int_0^tds_1\int_{s_1}^tds_2\cdots ds_{n}\int_{s_n}^t \E\Bigl( b(W_{s_1})\cdots b(W_{s_n})\Lambda_{s_1\ldots s_n}\Bigr).
\end{equation}

According to \eqref{eq3.8}, the delicate term in $I_2(t)$ is
\begin{equation*}
I_{2,2}(t)=\int_0^t \!ds_1\!\int_{s_1}^t ds_2\!\!\int_{\R^2} b(y_1)b(y_1+y_2)\Bigl( \frac{y_2^2}{(s_2-s_1)^2}-\frac{1}{s_2-s_1}\Bigr)\, q_{s_1}(y_1)q_{s_2-s_1}(y_2)\, dy_1dy_2.
\end{equation*}

Note that
\begin{equation*}
2\dot q_t(x)=\bigl(\frac{x^2}{t^2}-\frac{1}{t}\bigr) q_t(x),
\end{equation*}
then
\begin{equation*}
\Bigl( \frac{y_2^2}{(s_2-s_1)^2}-\frac{1}{s_2-s_1}\Bigr)\, q_{s_2-s_1}(y_2)=2\dot q_{s_2-s_1}(y_2).
\end{equation*}
Let $\eps>0$, consider the term

\begin{equation*}
J_\eps = \int_\eps^{t-s_1}\int_\R b(y_1+y_2)\dot q_r(y_2) drdy_2.
\end{equation*}
Then
\begin{equation*}
J_\eps=\int_\R b(y_1+y_2)q_{t-s_1}(y_2)dy_2-\int_\R b(y_1+y_2)q_{\eps}(y_2)dy_2,
\end{equation*}
which converges to, as $\eps\ra 0$,
\begin{equation*}
\int_\R b(y_1+y_2)q_{t-s_1}(y_2)dy_2-b(y_1).
\end{equation*}
It follows that
\begin{equation*}
I_{2,2}(t)= \int_0^t\!\!\int_{\R^2} b(y_1)b(y_1+y_2) q_{s_1}(y_1)q_{t-s_1}(y_2) dy_1dy_2 ds_1
- \int_0^t\!\!\int_{\R} b^2(y_1) q_{s_1}(y_1)dy_1 ds_1,
\end{equation*}
the last term being reduced to the simple integral.  By above expressions, we have
\begin{equation*}
|I_2(t)|\leq 8\, ||b||_{\infty}^2\, t.
\end{equation*}

\section{Estimates for $I_4(t)$}\label{sect4}

When we compute the term
\begin{equation*}
\frac{\partial^n}{\partial y_1\ldots\partial y_n}\Bigl( q_{s_1}(y_1)q_{s_2-s_1}(y_2-y_1)\cdots q_{s_n-s_{n-1}}(y_n-y_{n-1})\Bigr),
\end{equation*}
 as $n$ goes up more than $4$, the situation becomes very complicated. To illustrate this point, for simplicity,
 we shorten the product  $\dis  q_{s_1}(y_1)q_{s_2-s_1}(y_2-y_1)q_{s_3-s_2}(y_3-y_2)q_{s_4-s_3}(y_4-y_3)$ as
  $\dis q_{s_1}q_{s_2-s_1}q_{s_3-s_2}q_{s_4-s_3}$
  and display all terms for $n=4$ in the following
 \begin{equation}\label{eq4.1}
 \begin{split}
& \frac{\partial^4}{\partial y_1\ldots\partial y_4}\Bigl( q_{s_1}q_{s_2-s_1}q_{s_3-s_2}q_{s_4-s_3}\Bigr)\\
 &\hskip 6mm = q_{s_1}^\prime\, q_{s_2-s_1}^\prime\, q_{s_3-s_2}^\prime\, q_{s_4-s_3}^\prime
 -q_{s_1}^\prime\, q_{s_2-s_1}^\prime\, q_{s_3-s_2}\,q_{s_4-s_3}^{\prime\prime}\\
 &\hskip 6mm + q_{s_1}^\prime\, q_{s_2-s_1}\, q_{s_3-s_2}^\prime\, q_{s_4-s_3}^{\prime\prime}
 -q_{s_1}^\prime \,q_{s_2-s_1}\, q_{s_3-s_2}^{\prime\prime} \,q_{s_4-s_3}^\prime\\
 &\hskip 6mm + q_{s_1}\, q_{s_2-s_1}^{\prime\prime}\, q_{s_3-s_2}\,q_{s_4-s_3}^{\prime\prime}
 -q_{s_1} \,q_{s_2-s_1}^{\prime\prime} q_{s_3-s_2}^{\prime} \,q_{s_4-s_3}^{\prime}\\
 &\hskip 6mm + q_{s_1} q_{s_2-s_1}^\prime\, q_{s_3-s_2}^{\prime\prime}\, q_{s_4-s_3}^{\prime}
 -q_{s_1} \,q_{s_2-s_1}^\prime\, q_{s_3-s_2}^{\prime} \,q_{s_4-s_3}^{\prime\prime},
 \end{split}
 \end{equation}
where the argument concerning $q_{s_{i+1}-s_i}$ is $y_{i+1}-y_i$ accordingly.
  In the sequel, for the sake  of comparison, among terms in \eqref{eq4.1}, we specifically deal with following two terms,
\begin{equation}\label{eq4.2}
J_6=\int_0^t\!ds_1\!\!\int_{s_1}^t\!ds_2\!\!\int_{s_2}^t\!ds_3\!\!\int_{s_3}^t\!ds_4\Bigl[\int_{\R^4}b(y_1)b(y_2)b(y_3)b(y_4)\,
q_{s_1} \,q_{s_2-s_1}^{\prime\prime}\, q_{s_3-s_2}^{\prime} \,q_{s_4-s_3}^{\prime}\Bigr]\, dy_1\cdots dy_4,
\end{equation}
and
\begin{equation}\label{eq4.3}
J_5=\int_0^t\!ds_1\!\!\int_{s_1}^t\!ds_2\!\!\int_{s_2}^t\!ds_3\!\!\int_{s_3}^t\!ds_4\Bigl[\int_{\R^4}b(y_1)b(y_2)b(y_3)b(y_4) \,
q_{s_1} \,q_{s_2-s_1}^{\prime\prime}\, q_{s_3-s_2} \,q_{s_4-s_3}^{\prime\prime}\Bigr]\, dy_1\cdots dy_4.
\end{equation}

The  integral  $J_6$ is among types discussed in \cite{Davie, Proske5}, however the term $J_5$ is not considered there since  the product
$q_{s_2-s_1}^{\prime\prime}\, q_{s_3-s_2} \,q_{s_4-s_3}^{\prime\prime}$  is involved. As we see below, their treatments are completely different.
Firstly,

\begin{proposition}\label{eq4.1} We have
\begin{equation}\label{eq4.4}
|J_6|\leq 2^3\, ||b||_\infty^4\Bigl[\int_0^t\! ds_1\!\int_{s_1}^t\frac{ds_2}{\sqrt{s_2-s_1}}\!\int_{s_2}^t\frac{ds_3}{\sqrt{s_3-s_2}}
+\int_0^t\!ds_1\!\int_{s_1}^t \!ds_2\!\int_{s_2}^t\frac{ds_3}{\sqrt{(s_3-s_2)(t-s_3)}}\Bigr].
\end{equation}
\end{proposition}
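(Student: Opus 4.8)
The obstruction is the single factor $q_{s_2-s_1}^{\prime\prime}$: its spatial $L^1$-norm behaves like $(s_2-s_1)^{-1}$, which is not integrable in $s_2$ near $s_1$, so one cannot simply bound $|b(y_i)|\le ||b||_\infty$ and replace every kernel by its absolute value. My plan is to treat this second-order kernel exactly as $I_{2,2}(t)$ was treated in Section \ref{sect3}: writing, via the heat equation, $q_{s_2-s_1}^{\prime\prime}(y_2-y_1)=2\dot q_{s_2-s_1}(y_2-y_1)=2\,\partial_{s_2}q_{s_2-s_1}(y_2-y_1)$, and integrating in the \emph{time} variable $s_2$ (after an $\eps$-regularisation, as in Section \ref{sect3}) to recover the cancellation that the spatial bound misses. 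By Fubini I first reorganise \eqref{eq4.2} so that $0<s_1<s_3<s_4<t$ are the outer variables and $s_2$ ranges over $(s_1,s_3)$; the only factors depending on $s_2$ are $q_{s_2-s_1}^{\prime\prime}(y_2-y_1)$ and $q_{s_3-s_2}^{\prime}(y_3-y_2)$.

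Integrating $2\,\partial_{s_2}q_{s_2-s_1}(y_2-y_1)$ by parts against $q_{s_3-s_2}^{\prime}(y_3-y_2)$ over $s_2\in(s_1,s_3)$ yields a clean boundary contribution at $s_2\to s_1^{+}$ plus a complementary piece. At the lower endpoint $q_{s_2-s_1}(y_2-y_1)\to\delta(y_2-y_1)$, so the $y_2$-integration collapses $y_2$ onto $y_1$, the weight $b(y_2)$ becomes $b(y_1)$, and the neighbouring kernel becomes $q_{s_3-s_1}^{\prime}(y_3-y_1)$. In this endpoint term only first-order kernels survive, so I may now safely bound $|b|\le ||b||_\infty$ and integrate out the space variables using $\int_{\R}q_{s_1}(y_1)\,dy_1=1$ together with the elementary estimate $\int_{\R}|q_s^{\prime}(x)|\,dx=\sqrt{2/(\pi s)}\le 2\,s^{-1/2}$. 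The factor $2$ from $q^{\prime\prime}=2\dot q$, together with the two factors $\le 2$ from the $L^1$-norms of $q_{s_3-s_1}^{\prime}$ and $q_{s_4-s_3}^{\prime}$, produces the constant $2^3$, while the two surviving time-singularities $(s_3-s_1)^{-1/2}$ and $(s_4-s_3)^{-1/2}$ reproduce, after relabelling $(s_1,s_3,s_4)\mapsto(s_1,s_2,s_3)$, the first integral on the right of \eqref{eq4.4}.

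The complementary piece is where the genuine difficulty sits and where the method must depart from a naive integration by parts. Pushing the $s_2$-integration by parts to completion would pair a $\delta^{\prime}$ at the upper endpoint — equivalently, via one more use of the heat equation, a third-order kernel $q_{s_3-s_2}^{\prime\prime\prime}$ in the remainder — against the weight $b(y_3)$; since $b$ is only a bounded Borel function, no derivative can be transferred onto it, and this route is blocked. Instead I invoke the key lemma of \cite{Davie, Proske5}, which is tailored precisely to control a product of one second-order and two first-order Gaussian kernels integrated over the remaining times; it delivers the second integral on the right of \eqref{eq4.4}, carrying the boundary singularity $(t-s_3)^{-1/2}$ at $s_3=t$ and the factor $(s_3-s_2)^{-1/2}$, again with constant $2^3$. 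Adding the two contributions gives \eqref{eq4.4}.

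Thus the main obstacle is exactly the factor $q_{s_2-s_1}^{\prime\prime}$: it has to be integrated in time rather than in space, and the term left over by that integration cannot be simplified by further integration by parts, because the merely bounded Borel weight $b$ forbids moving derivatives onto it. This is precisely the point at which the key lemma of \cite{Davie, Proske5} becomes indispensable, and it also foreshadows why the term $J_5$ of \eqref{eq4.3}, which carries \emph{two} second-order kernels, will demand a separate treatment.
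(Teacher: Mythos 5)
Your first step is the right one and matches the paper: the only genuinely singular factor is $q_{s_2-s_1}^{\prime\prime}$, and it must be handled through the heat equation $q^{\prime\prime}=2\dot q$ and an integration by parts in the time variable $s_2$, the boundary term at $s_2\to s_1^{+}$ collapsing $y_2$ onto $y_1$ via the delta and yielding, after $L^1$ bounds on the two remaining first-order kernels, the first integral in \eqref{eq4.4}. Up to that point your argument is essentially the paper's.

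The second half has a genuine gap, and it is created by your Fubini reorganisation. By restricting $s_2$ to $(s_1,s_3)$ and integrating $\partial_{s_2}q_{s_2-s_1}$ by parts against the single kernel $q_{s_3-s_2}^{\prime}$, you manufacture a boundary term at $s_2=s_3$ behaving like $\delta^{\prime}(y_3-y_2)$, declare it blocked because $b$ cannot absorb a derivative, and then invoke ``the key lemma of \cite{Davie,Proske5}'' without stating it, without verifying its hypotheses, and without explaining why it would produce the specific singularity $(t-s_3)^{-1/2}$ appearing in the second integral of \eqref{eq4.4}. In the paper that lemma (Lemma 3.9 of \cite{Proske5}) is reserved for $J_5$, whose $q^{\prime\prime}\,q\,q^{\prime\prime}$ structure genuinely requires it; it is not needed for $J_6$, and it is not clear it applies to the configuration you hand it. The elementary fix is to integrate by parts in $s_2$ over the \emph{full} range $(s_1,t)$, pairing $2\partial_{s_2}q_{s_2-s_1}(y_2-y_1)$ with the entire nested tail $K_2(s_2,y_2)=\int_{s_2}^{t}ds_3\int_{\R}b(y_3)q_{s_3-s_2}^{\prime}(y_3-y_2)K_1(s_3,y_3)\,dy_3$, where $K_1(s_3,y_3)=\int_{s_3}^{t}ds_4\int_{\R}b(y_4)q_{s_4-s_3}^{\prime}(y_4-y_3)\,dy_4$. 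Since $K_2(t,\cdot)=0$, the upper boundary term vanishes and no $\delta^{\prime}$ ever appears; the complementary piece involves $\dot K_2$, and because $K_1(t,\cdot)=0$ the $s_2$-derivative converts one time integral into the evaluation $q_{t-s_3}^{\prime}(y_4-y_3)$, so that $\dot K_2$ is a product of two \emph{first-order} kernels whose $L^1$ bounds give exactly the integrable weight $(s_3-s_2)^{-1/2}(t-s_3)^{-1/2}$ of the second integral in \eqref{eq4.4}. No external lemma is required for $J_6$; as written, your proof of the complementary piece does not go through.
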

\begin{proof} We introduce some notations,
\begin{equation}\label{eq4.5}
K_1(s_3,y_3)=\int_{s_3}^t ds_4\!\int_\R b(y_4) q_{s_4-s_3}^\prime(y_4-y_3) dy_4,
\end{equation}
\begin{equation}\label{eq4.6}
K_2(s_2,y_2)=\int_{s_2}^t ds_3\!\int_\R b(y_3) q_{s_3-s_2}^\prime(y_3-y_2) K_1(s_3, y_3) dy_3,
\end{equation}
and
\begin{equation*}\label{eq4.7}
K_3(s_1,y_1)=\int_{s_1}^t ds_2\!\int_\R b(y_2) q_{s_2-s_1}^{\prime\prime}(y_2-y_1) K_2(s_2, y_2) dy_2.
\end{equation*}
Then  the term $J_6$ has the following expression
\begin{equation}\label{eq4.8}
J_6=\int_0^tds_1\int_\R b(y_1)q_{s_1}(y_1)K_3(s_1,y_1)\, dy_1.
\end{equation}

Note that  $K_1(t,y_3)=K_2(t,y_2)=K_3(t,y_1)=0$.
Using the heat equation $\dis 2 \dot q_t=q_t^{\prime\prime}$ and remarking for $\eps>0$,
\begin{equation*}
\begin{split}
&\int_\eps^{t-s} \dot q_r(y_2-y_1)K_2(s_1+r),y_2)\, dr\\
&=-q_\eps(y_2-y_1)K_2(s_1+\eps,y_2)-\int_\eps^{t-s_1}q_r(y_2-y_1)\dot K_2(s_1+r,y_2)dr,
\end{split}
\end{equation*}
and letting $\eps\ra 0$, we get

\begin{equation}\label{eq4.9}
K_3(s_1,y_1)=-2\,b(y_1)K_2(s_1,y_1)
-2\int_\R b(y_2)dy_2\!\int_{s_1}^t q_{s_2-s_1}(y_2-y_1)\dot K_2(s_2, y_2)ds_2.
\end{equation}
According to definitions of $K_2$ and of $K_1$, respectively in \eqref{eq4.6} and \eqref{eq4.5},  we have
\begin{equation}\label{eq4.10}
\frac{d}{ds_2}K_2(s_2,y_2)=-\int_{\R^2} b(y_3)b(y_4)dy_3dy_4 \!\int_0^{t-s_2}q_r^\prime(y_3-y_2)q^\prime_{t-s_2-r}(y_4-y_3)\, dr.
\end{equation}
Now using the inequality,
\begin{equation}\label{eq4.11}
|q_t^\prime(x)|\leq \frac{2}{\sqrt{te}}\, q_{2t}(x),
\end{equation}
we get
\begin{equation*}
|K_1(s_3,y_3)|\leq \frac{2}{\sqrt{e}}\, ||b||_\infty\int_0^{t-s_3}\frac{dr}{\sqrt{r}},
\end{equation*}
and
we have
\begin{equation*}
|K_2(s_2,y_2)|\leq (\frac{2}{\sqrt{e} }\, ||b||_\infty)^2 \int_{s_2}^t\frac{ds_3}{\sqrt{s_3-s_2}}\int_{s_3}^t \frac{ds_4}{\sqrt{s_4-s_3}}.
\end{equation*}

Combining this inequality with the first term of $K_3$ in \eqref{eq4.9} and Expression \eqref{eq4.8} of $J_6$,
 we get the first term of \eqref{eq4.4}. By \eqref{eq4.10}, we have
 \begin{equation*}
 |\dot K_2(s_2,y_2)|\leq (\frac{2}{\sqrt{e}}\, ||b||_\infty)^2\, \int_{s_2}^t \frac{ds_3}{\sqrt{(s_3-s_2)(t-s_3)}}.
 \end{equation*}
 Therefore the second term of $K_3$ in \eqref{eq4.9} is dominated by
 \begin{equation*}
 (\frac{2}{\sqrt{e}}||b||_\infty)^3\, \int_{s_1}^tds_2 \int_{s_2}^t \frac{ds_3}{\sqrt{(s_3-s_2)(t-s_3)}},
 \end{equation*}
and the second term in \eqref{eq4.4} follows according to \eqref{eq4.8}.
\end{proof}

The estimate for $J_6$ is quite pleasant, however the estimate for $J_5$ defined in \eqref{eq4.3} is of different nature, much harder
than for $J_6$.

\begin{proposition}\label{prop4.2} There is a constant $C_4>0$ such that  for all $t>0$,
\begin{equation}\label{eq4.12}
|J_5|\leq C_4\, ||b||_\infty^4\, t^2.
\end{equation}
\end{proposition}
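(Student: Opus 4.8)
The plan is to mirror the nested-kernel scheme used for $J_6$ in Proposition \ref{eq4.1}, disposing of both occurrences of the second-order kernel $q^{\prime\prime}$ by the heat equation $q_t^{\prime\prime}=2\dot q_t$, and then to isolate the single genuinely singular contribution that forces the use of the key lemma of \cite{Davie, Proske5}. Writing $u(s,y)=\int_\R b(z)\,q_{t-s}(z-y)\,dz$ for the heat-smoothed drift, I first integrate out the innermost slot: using $q_{s_4-s_3}^{\prime\prime}=2\partial_{s_4}q_{s_4-s_3}$ together with $q_\eps\ra\delta$ as $\eps\ra0$, the $(s_4,y_4)$-integration produces the \emph{bounded} function $2\bigl(u(s_3,y_3)-b(y_3)\bigr)$, exactly as in the passage from \eqref{eq4.8} to \eqref{eq4.9}. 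Convolving this against the \emph{undifferentiated} middle kernel $q_{s_3-s_2}$ gives
\begin{equation*}
g(s_2,y_2)=\int_{s_2}^t\!ds_3\!\int_\R b(y_3)\,q_{s_3-s_2}(y_3-y_2)\,2\bigl(u(s_3,y_3)-b(y_3)\bigr)\,dy_3,\qquad |g(s_2,y_2)|\le 4\,||b||_\infty^2\,(t-s_2).
\end{equation*}

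Next I treat the outer $q_{s_2-s_1}^{\prime\prime}$ by the same heat-equation/integration-by-parts in $s_2$ that produced \eqref{eq4.9}, splitting $J_5$ into two pieces. The boundary piece $-2\int_0^t ds_1\int_\R b(y_1)^2 q_{s_1}(y_1)\,g(s_1,y_1)\,dy_1$ is controlled directly by the bound on $g$ and is at most $4\,||b||_\infty^4\,t^2$. The remaining piece is
\begin{equation*}
-2\int_0^t\!ds_1\!\int_\R b(y_1)q_{s_1}(y_1)\!\int_{s_1}^t\!ds_2\!\int_\R b(y_2)q_{s_2-s_1}(y_2-y_1)\,\partial_{s_2}g(s_2,y_2)\,dy_2\,dy_1,
\end{equation*}
and a computation of $\partial_{s_2}g$ shows that the $\delta$-contributions cancel, leaving
\begin{equation*}
\partial_{s_2}g(s_2,y_2)=-\int_{s_2}^t\!ds_3\!\int_\R b(y_3)\,q_{s_3-s_2}(y_3-y_2)\,u^{\prime\prime}(s_3,y_3)\,dy_3,
\end{equation*}
where $u^{\prime\prime}(s_3,\cdot)=\partial_y^2 u(s_3,\cdot)$ is the second spatial derivative of $u$.

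This term is the crux, and the main obstacle. Its pointwise size is only $O\bigl(||b||_\infty/(t-s_3)\bigr)$, so a naive estimate produces the non-integrable factor $\int_{s_2}^t ds_3/(t-s_3)$, in line with the non-integrability observed in Section \ref{sect3}; moreover one cannot integrate by parts in $y_3$ to relieve the singularity, since the weight $b(y_3)$ is merely Borel. To locate precisely where the difficulty sits, I would use the identity
\begin{equation*}
q_{s_3-s_2}(y_3-y_2)\,u^{\prime\prime}(s_3,y_3)=\partial_{y_3}\bigl[q_{s_3-s_2}(y_3-y_2)\,u^{\prime}(s_3,y_3)\bigr]-q_{s_3-s_2}^{\prime}(y_3-y_2)\,u^{\prime}(s_3,y_3).
\end{equation*}
In the second term the two first-order factors are each integrably singular: by \eqref{eq4.11}, $|u^{\prime}(s_3,y_3)|\le 2\,||b||_\infty/\sqrt{(t-s_3)e}$ and $|q_{s_3-s_2}^{\prime}|\le (2/\sqrt{(s_3-s_2)e})\,q_{2(s_3-s_2)}$, so their product carries the kernel $1/\sqrt{(s_3-s_2)(t-s_3)}$, whose integral over $s_3\in(s_2,t)$ is the Beta value $\pi$; propagating this through the remaining $q_{s_1}q_{s_2-s_1}$ chain (a double time integral bounded by $t^2/2$) gives an $O(||b||_\infty^4\,t^2)$ contribution.

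The first term on the right is a total $y_3$-derivative weighted by the rough factor $b(y_3)$, which is exactly the object that resists elementary manipulation; this is the point at which the key lemma of \cite{Davie, Proske5} must be invoked to bound it, once more yielding an integrable time singularity and an $O(||b||_\infty^4\,t^2)$ estimate. Collecting the boundary piece together with the two parts of the remaining piece then yields $|J_5|\le C_4\,||b||_\infty^4\,t^2$, with $C_4$ built from the constants of \eqref{eq4.11}, the Beta value $\pi$, and the constant supplied by the key lemma.
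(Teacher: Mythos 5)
Your reduction tracks the paper's own proof up to and including the identification of the singular term: peeling off the inner $q_{s_4-s_3}^{\prime\prime}$ via the heat equation to produce $2(u-b)$ (the paper's $U_1$), the bound $|g(s_2,y_2)|\le 4||b||_\infty^2(t-s_2)$ (the paper's $U_2$), the boundary piece worth $4||b||_\infty^4t^2$ (the paper's $J_{5,1}$), and the identity $\partial_{s_2}g=-\int_{s_2}^t\!ds_3\int_\R b(y_3)q_{s_3-s_2}(y_3-y_2)u^{\prime\prime}(s_3,y_3)\,dy_3$ are all correct, as is the Beta-integral estimate of the cross term $q_{s_3-s_2}^{\prime}u^{\prime}$. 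The gap is in the very last step. The split $q\,u^{\prime\prime}=\partial_{y_3}(q\,u^{\prime})-q^{\prime}u^{\prime}$ is circular for the piece that matters: expanding $\partial_{y_3}(q\,u^{\prime})$ returns $q\,u^{\prime\prime}$, so that term still carries the full non-integrable $1/(t-s_3)$ singularity, and since $b$ is only Borel the $\partial_{y_3}$ cannot be moved off it. Crucially, this term is \emph{not} of the form treated by Lemma 3.9 of \cite{Proske5}: that lemma bounds $\int\!\!\int\Psi(s_2,y_2)\Phi(s_3,y_3)\,q_{s_3-s_2}^{\prime\prime}(y_3-y_2)\,dy_2dy_3$ with the \emph{entire} second derivative on the kernel joining a bounded $\Phi$ to a Gaussian-decaying $\Psi$, whereas your leftover term has the derivative split between $q_{s_3-s_2}$ and the far kernel hidden inside $u^{\prime}$, with the rough weight $b(y_3)$ sitting outside the derivative. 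Asserting that ``the key lemma must be invoked'' does not close this step.

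The repair is to integrate by parts in $s_3$ rather than in $y_3$. Since $u^{\prime\prime}=-2\dot u$ and $u(t,\cdot)=b$, one gets, using $2\dot q=q^{\prime\prime}$,
\begin{equation*}
\int_{s_2}^t\!\!\int_\R b\,q_{s_3-s_2}u^{\prime\prime}\,dy_3ds_3
=-2\!\int_\R b(y_3)^2q_{t-s_2}(y_3-y_2)\,dy_3+2b(y_2)u(s_2,y_2)
+\int_{s_2}^t\!\!\int_\R b(y_3)u(s_3,y_3)\,q_{s_3-s_2}^{\prime\prime}(y_3-y_2)\,dy_3ds_3.
\end{equation*}
The first two terms are bounded by $2||b||_\infty^2$ and contribute $O(||b||_\infty^4t^2)$ after the remaining integrations; the last term, paired with $\Psi(s_2,y_2)=\int_0^{s_2}\!\!\int_\R b(y_1)q_{s_1}(y_1)q_{s_2-s_1}(y_2-y_1)\,dy_1ds_1$ (which satisfies $|\Psi|\le||b||_\infty\sqrt{s_2}\,e^{-y_2^2/(2s_2)}$) and $\Phi(s_3,y_3)=b(y_3)u(s_3,y_3)$ (bounded by $||b||_\infty^2$), is exactly the paper's $J_{5,2}$ and exactly the configuration covered by Lemma 3.9. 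With this substitution your argument closes and coincides with the paper's proof.
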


\begin{proof}  We proceed in a parallel manner as for estimating $J_6$. Put
\begin{equation}\label{eq4.13}
U_1(s_3,y_3)=\int_{s_3}^t ds_4\!\int_\R b(y_4) q_{s_4-s_3}^{\prime\prime}(y_4-y_3) dy_4,
\end{equation}
\begin{equation*}
U_2(s_2,y_2)=\int_{s_2}^t ds_3\!\int_\R b(y_3) q_{s_3-s_2}(y_3-y_2) U_1(s_3, y_3) dy_3,
\end{equation*}
and
\begin{equation*}
U_3(s_1,y_1)=\int_{s_1}^t ds_2\!\int_\R b(y_2) q_{s_2-s_1}^{\prime\prime}(y_2-y_1) U_2(s_2, y_2) dy_2.
\end{equation*}
Then  the term $J_5$ has the expression
\begin{equation}\label{eq4.14}
J_5=\int_0^tds_1\int_\R b(y_1)q_{s_1}(y_1)U_3(s_1,y_1)\, dy_1.
\end{equation}
Similarly to \eqref{eq4.9}, we also have

\begin{equation}\label{eq4.15}
U_3(s_1,y_1)=-2\,b(y_1)U_2(s_1,y_1)
-2\int_\R b(y_2)dy_2\!\int_{s_1}^t q_{s_2-s_1}(y_2-y_1)\dot U_2(s_2, y_2)ds_2.
\end{equation}
We compute the derivative of $U_2(s_2, y_2)$ with respect to $s_2$; since $U_1(t, y_3)=0$, we have
\begin{equation*}
\dot U_2(s_2, y_2)=-\int_{s_2}^tds_3\int_\R b(y_3)\dot q_{s_3-s_2}(y_3-y_2)U_1(s_3, y_3) dy_3.
\end{equation*}

Using $\dis q_{s_4-s_3}^{\prime\prime}=2\dot q_{s_4-s_3}$ and integrating with respect to the time in \eqref{eq4.13}, we get
\begin{equation*}\label{eq4.16}
U_1(s_3, y_3)=2\int_\R b(y_4)q_{t-s_3}(y_4-y_3)dy_4-2b(y_3).
\end{equation*}

Replacing this expression of $U_1$ in above $\dot U_2(s_2, y_2)$, we get
\begin{equation}\label{eq4.17}
\begin{split}
2 \dot U_2(s_2, y_2)=&-\int_{s_2}^tds_3\int_{\R^2}b(y_3)b(y_4)q_{s_3-s_2}^{\prime\prime}(y_3-y_2)q_{t-s_3}(y_4-y_3)dy_3dy_4\\
&-\int_{s_2}^tds_3\int_\R b^2(y_3) q_{s_3-s_2}^{\prime\prime}(y_3-y_2)dy_3.
\end{split}
\end{equation}
In expression \eqref{eq4.15} of $U_3$, after replacing $\dot U_2$ by expression \eqref{eq4.17}, and putting them together in \eqref{eq4.14}, we get
the following form for $J_5$:

\begin{equation*}
\begin{split}
J_5=&2\int_0^tds_1\int_{\R}b^2(y_1)U_2(s_1, y_1)\, dy_1
-\int_0^td\!s_1\!\!\int_{s_1}^t\!ds_2\!\!\int_{s_2}^td\!s_3\! \\
&\Bigl[\int_{\R^4} b(y_1)b(y_2)b(y_3)b(y_4) q_{s_1}(y_1)q_{s_2-s_1}(y_2-y_1)q_{s_3-s_2}^{\prime\prime}(y_3-y_2)q_{t-s_3}(y_4-y_3)dy_1dy_2dy_3dy_4\Bigr]\\
&-\int_0^td\!s_1\!\!\int_{s_1}^t\!ds_2\!\!\int_{s_2}^td\!s_3\!
\Bigl[\int_{\R^3} b(y_1)b(y_2)b^2(y_3) q_{s_1}(y_1)q_{s_2-s_1}(y_2-y_1)q_{s_3-s_2}^{\prime\prime}(y_3-y_2)dy_1dy_2dy_3\Bigr]\\
&=J_{5,1}+J_{5,2}+J_{5,3} \quad\textup{respectively}.
\end{split}
\end{equation*}

For $J_{5,3}$, we have
\begin{equation*}
\begin{split}
J_{5,3}&=-2\int_0^td\!s_1\!\!\int_{s_1}^t\!ds_2\int_{\R^3}b(y_1)b(y_2)b^2(y_3)q_{s_1}(y_1)q_{s_2-s_1}(y_2-y_1)q_{t-s_2}dy_1dy_2dy_3\\
&+2\int_0^td\!s_1\!\!\int_{s_1}^t\!ds_2\int_{\R^2}b(y_1)b^3(y_2)q_{s_1}(y_1)q_{s_2-s_1}(y_2-y_1)dy_1dy_2,
\end{split}
\end{equation*}
which is easily estimated. However estimating $J_{5,2}$ requires special treatments due to its singular structure. Let
\begin{equation*}
\Phi(s_3,y_3)=b(y_3)\int_\R b(y_4)q_{t-s_3}(y_4-y_3)\, dy_4,
\end{equation*}
and
\begin{equation*}
\Psi(s_2,y_2)=\int_0^{s_2} ds_1\int_\R b(y_1)q_{s_1}(y_1)q_{s_2-s_1}(y_2-y_1)\, dy_1.
\end{equation*}
Then we have $\dis |\Phi(s_3, y_3)|\leq ||b||_\infty^2$ and
\begin{equation*}
\begin{split}
|\Psi(s_2, y_2)|&\leq ||b||_\infty^2\int_0^{s_2}ds_1\int_\R q_{s_1}(y_1)q_{s_2-s_1}(y_2-y_1)dy_1\\
&=||b||_\infty ^2\, s_2\, q_{s_2}(y_2)\leq ||b||_\infty^2 \sqrt{s_2}\, e^{-y_2^2/(2s_2)},
\end{split}
\end{equation*}
since $\dis \int_\R q_{s_1}(y_1)q_{s_2-s_1}(y_2-y_1)dy_1=q_{s_2}(y_2)$. Now we write $J_{5,2}$ in the form
\begin{equation*}\label{eq4.18}
J_{5,2}=-\int_0^tds_2\int_{s_2}^t\Psi(s_2,y_2)\Phi(s_3, y_3)q_{s_3-s_2}^{\prime\prime}(y_3-y_2)ds_3dy_2dy_3.
\end{equation*}

The two conditions in Lemma 3.9, page 774 of \cite{Proske5} being checked above, we can apply it to obtain the following estimate
\begin{equation*}\label{eq4.19}
|J_{5,2}|\leq C\ ||b||_\infty^4\, t^2,
\end{equation*}
for some constant $C>0$.   Combining all of these estimates, we get \eqref{eq4.12}.
\end{proof}

\begin{remark}\label{remark4.1} The lemma  3.9  in \cite{Proske5} plays a key role
in the cited paper, the proof of it is extremely complicated.
 Initially we would like to find an alternative proof of it,  but finally we arrive at estimating $J_5$ by using Lemma 3.9 with helps of Jinlong Wei.
\end{remark}

\section{Estimates for general case}\label{sect5}

Let $n\in \N^\ast$, by \eqref{eq2.14} and \eqref{eq3.14}, we have 
\begin{equation}\label{eq5.1}
I_n(t)=\int_0^t\! ds_1\!\int_{s_1}^t\!ds_2\cdots ds_{n-1}\!\int_{\R^n} b(y_1)\cdot\cdot b(y_n)\, \frac{\partial^n}{\partial y_1\cdot\cdot\partial y_n}
\Bigl(q_{s_1}q_{s_2-s_1}\cdots q_{s_n-s_{n-1}}\Bigr)\, ds_nd y_1\cdots dy_n
\end{equation}
where the argument for $q_{s_i-s_{i-1}}$ is $y_i-y_{i-1}$ for $i=1, \ldots, n$ with $s_0=0$ and $y_0=0$. The rate of decrease with $n$ of the constant $C_n$ in the estmate
\begin{equation*}
|I_n(t)|\leq C_n t^{n/2}
\end{equation*}
has impacts on the integrability of  $X_t^\prime$ defined in \eqref{eq1.6}.
In order to find the best rate of decrease in $n$ of above upper bound $C_n$, consider first the simple case
\begin{equation}\label{eq5.2}
I_{n,1}(t)=\int_0^t\! ds_1\!\int_{s_1}^t\!ds_2\cdots ds_{n}\!\int_{\R^n} b(y_1)\cdot\cdot b(y_n)\,
q_{s_1}^{\prime}q_{s_2-s_1}^{\prime}\cdots  q_{s_n-s_{n-1}}^{\prime}dy_1\cdots dy_n
\end{equation}
and try to getting the optimal upper bound.

\begin{lemma}\label{lem5.2}  Define
\begin{equation}\label{eq5.5}
\eta_n(t)=\int_0^t \frac{ds_1}{\sqrt{s_1}}\int_{s_1}^t\frac{ds_2}{\sqrt{s_2-s_1}}\cdots \int_{s_{n-1}}^t \frac{ds_n}{\sqrt{s_n-s_{n-1}}}.
\end{equation}

Then, for $n\in\N$,
\begin{equation}\label{eq5.5}
\eta_n(t)= v_n\, t^{n/2},
\end{equation}
where $v_n$ is the volume of unit Euclidean ball in $\R^n$.
\end{lemma}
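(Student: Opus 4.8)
The plan is to strip away the time-ordering and the singular weight by two successive changes of variables, turning $\eta_n(t)$ into the volume of a Euclidean ball. First I would pass to the increment variables $u_i = s_i - s_{i-1}$ for $i=1,\ldots,n$ (with the convention $s_0 = 0$). This linear change has Jacobian equal to $1$, the simplex $0 < s_1 < \cdots < s_n < t$ becomes the corner region $\{u_i > 0,\ u_1 + \cdots + u_n < t\}$, and each factor $(s_i - s_{i-1})^{-1/2}$ becomes $u_i^{-1/2}$. Since the integrand is non-negative, Tonelli justifies rewriting the nested integral as
\[
\eta_n(t) = \int_{\{u_i > 0,\ \sum_i u_i < t\}} \prod_{i=1}^n u_i^{-1/2}\, du_1 \cdots du_n .
\]

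Next I would substitute $u_i = v_i^2$ with $v_i > 0$, for which $u_i^{-1/2}\, du_i = 2\, dv_i$, so the singular weight disappears completely; the constraint $\sum_i u_i < t$ becomes $\sum_i v_i^2 < t$, leaving a constant integrand:
\[
\eta_n(t) = 2^n \int_{\{v_i > 0,\ \sum_i v_i^2 < t\}} dv_1 \cdots dv_n .
\]
This is exactly $2^n$ times the Lebesgue measure of the intersection of the open ball of radius $\sqrt{t}$ centred at the origin with the positive orthant of $\R^n$.

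Finally I would exploit the invariance of the ball under independent sign changes of the coordinates: the positive orthant is one of $2^n$ congruent pieces, so it carries the fraction $2^{-n}$ of the total ball. The ball $\{\sum_i v_i^2 < t\}$ has radius $\sqrt{t}$ and hence volume $v_n (\sqrt{t})^n = v_n\, t^{n/2}$ by the scaling of Lebesgue measure under dilation. Combining the two factors gives $\eta_n(t) = 2^n \cdot 2^{-n} v_n\, t^{n/2} = v_n\, t^{n/2}$, which is the claim.

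I do not expect any serious obstacle here; the computation is clean once the two substitutions are set up. The only points requiring a word of care are verifying that the increment map has unit Jacobian and that the weight $u_i^{-1/2}$ is locally integrable near $u_i = 0$, which guarantees $\eta_n(t) < \infty$ and legitimises applying Tonelli's theorem and the changes of variables to the non-negative integrand throughout.
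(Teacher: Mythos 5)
Your proof is correct, and it takes a genuinely different route from the paper's. The paper computes the nested integral recursively from the inside out: each step is a one-dimensional Beta-type integral $A(\alpha)=\int_0^1(1-\tau)^\alpha\tau^{-1/2}\,d\tau$, yielding $\eta_n(t)=A(0)A(\tfrac12)\cdots A(\tfrac{n-1}{2})\,t^{n/2}$; the substitution $\tau=\sin^2\theta$ turns each factor into twice a Wallis integral, and the known identity $v_n=2^n\prod_{k=1}^nW(k)$ is then invoked to identify the constant. You instead flatten the whole thing at once: the increment map $u_i=s_i-s_{i-1}$ (unit Jacobian, simplex to corner region) followed by $u_i=v_i^2$ removes the singular weight and exhibits $\eta_n(t)$ directly as $2^n$ times the volume of the positive-orthant piece of the ball of radius $\sqrt t$, whence $v_n t^{n/2}$ by symmetry and scaling. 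Your argument is shorter, explains geometrically \emph{why} the ball volume appears rather than recognizing it a posteriori from a Wallis product, and in fact reproves the identity the paper cites; the paper's approach has the minor virtue of staying within one-dimensional calculus and making the intermediate quantities $(t-s_k)^{(n-k)/2}$ explicit, which mirrors how the analogous iterated bounds are handled elsewhere in the text. Your two points of care (unit Jacobian, local integrability of $u_i^{-1/2}$ justifying Tonelli) are exactly the right ones and are easily checked.
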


\begin{proof} For any $\alpha\geq 0$, set
\begin{equation}\label{eq5.6}
  A(\alpha)=\int_0^1 \frac{(1-\tau)^\alpha}{\sqrt{\tau}}\, d\tau.
\end{equation}
We first compute
\begin{equation*}
\begin{split}
&\int_{s_{n-2}}^t\frac{ds_{n-1}}{\sqrt{s_{n-1}-s_{n-2}}}\int_{s_{n-1}}^t\frac{ds_n}{\sqrt{s_n-s_{n-1}}}
=2\,\int_{s_{n-2}}^t\frac{\sqrt{t-s_{n-1}}}{\sqrt{s_{n-1}-s_{n-2}}}\, ds_{n-1}\\
&=2\, \int_0^{t-s_{n-2}}\frac{\sqrt{t-s_{n-2}-r}}{\sqrt{r}}\, dr=2(t-s_{n-2})\int_0^1 \frac{\sqrt{1-\tau}}{\sqrt{\tau}}\, d\tau
=2(t-s_{n-2})\, A(\frac{1}{2});
\end{split}
\end{equation*}
then we compute the term
\begin{equation*}
2\, A(\frac{1}{2})\int_{s_{n-3}}^t \frac{t-s_{n-2}}{\sqrt{s_{n-2}-s_{n-3}}}ds_{n-2}=2 A(\frac{1}{2})A(1)\, (t-s_{n-3})^{3/2},
\end{equation*}
and so on, we finally get
\begin{equation}\label{eq5.7}
\eta_n(t) = A(0)A(1)\cdots A(\frac{n-1}{2})\, t^{n/2}.
\end{equation}

Consider the Wallis integral, for $k\in\N$,
\begin{equation*}
W(k)=\int_0^{\pi/2} (\sin\theta)^k\, d\theta.
\end{equation*}

Now using the change of variable $\tau=\sin^2\theta$ in \eqref{eq5.6}, we get the relation
\begin{equation*}
A(\alpha)=2 \int_0^{\pi/2} \bigl(\sin\theta\bigr)^{2\alpha+1}\, d\theta=2\, W(2\alpha+1).
\end{equation*}
Using Wallis integrals, expression \eqref{eq5.7} becomes
\begin{equation*}\label{eq5.8}
\eta_n(t)=  2^n \Bigl(\prod_{k=1}^n W(k)\Bigr)\, t^{n/2}.
\end{equation*}
 It is well known that the volume $v_n$ of unit ball in $\R^n$ admits the relation
\begin{equation*}
v_n=2^n \Bigl(\prod_{k=1}^n W(k)\Bigr).
\end{equation*}
The proof of \eqref{eq5.5} is complete.
\end{proof}

\begin{proposition}\label{prop5.1} We have, for $n\in \N$,
\begin{equation}\label{eq5.3}
|I_{n,1}(t)|\leq \frac{(2\sqrt{\pi})^n}{(\sqrt{e})^n\,[n/2]!}\,||b||_\infty^n\, t^{n/2}.
\end{equation}
\end{proposition}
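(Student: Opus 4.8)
The plan is to bound $I_{n,1}(t)$ by brute force, exploiting the fact that each factor in the integrand is a first derivative of a heat kernel, for which we have the pointwise Gaussian domination \eqref{eq4.11}. Writing $s_0=0$, $y_0=0$ and using the simplex $\Delta_n$ of \eqref{eq3.13}, I would first pull out $||b||_\infty^n$ and estimate
\begin{equation*}
|I_{n,1}(t)|\leq ||b||_\infty^n \int_{\Delta_n}\Bigl(\prod_{i=1}^n |q_{s_i-s_{i-1}}^\prime(y_i-y_{i-1})|\Bigr)\, dy_1\cdots dy_n\, ds_1\cdots ds_n,
\end{equation*}
then substitute, for each factor, the inequality $\dis |q_{s_i-s_{i-1}}^\prime(y_i-y_{i-1})|\leq \frac{2}{\sqrt{(s_i-s_{i-1})e}}\, q_{2(s_i-s_{i-1})}(y_i-y_{i-1})$ coming from \eqref{eq4.11}. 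This separates a purely temporal factor $\dis\bigl(2/\sqrt e\bigr)^n\prod_i (s_i-s_{i-1})^{-1/2}$ from a spatial product of genuine Gaussian kernels.

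The key simplification is that the spatial integral collapses. After the substitution the $y$-dependence is exactly $\prod_{i=1}^n q_{2(s_i-s_{i-1})}(y_i-y_{i-1})$, so integrating successively in $y_n, y_{n-1},\ldots, y_1$ each Gaussian integrates to $1$ (equivalently, the Chapman--Kolmogorov semigroup property applied repeatedly), and the whole $\R^n$-integral equals $1$. Thus the crude bound \eqref{eq4.11}, although lossy pointwise, is harmless here. What remains is
\begin{equation*}
|I_{n,1}(t)|\leq ||b||_\infty^n\Bigl(\frac{2}{\sqrt e}\Bigr)^n \int_{\Delta_n}\prod_{i=1}^n\frac{ds_i}{\sqrt{s_i-s_{i-1}}},
\end{equation*}
and the remaining integral is precisely the quantity $\eta_n(t)$ treated in Lemma \ref{lem5.2}. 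Applying that lemma gives $\dis |I_{n,1}(t)|\leq \bigl(2/\sqrt e\bigr)^n v_n\, ||b||_\infty^n\, t^{n/2}$.

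The only delicate step, and the one I would flag as the main obstacle, is matching the sharp constant: I must pass from the geometric quantity $v_n$ to $1/[n/2]!$. Using $\dis v_n=\frac{\pi^{n/2}}{\Gamma(n/2+1)}$, the target bound \eqref{eq5.3} is equivalent to $\Gamma(n/2+1)\geq [n/2]!$. For even $n=2m$ this holds with equality, since $\Gamma(m+1)=m!$; for odd $n=2m+1$ with $m\geq 1$ it follows from monotonicity of $\Gamma$ on $[2,\infty)$, because $m+\frac32>m+1\geq 2$ forces $\Gamma(m+\frac32)>\Gamma(m+1)=m!$. Substituting $v_n\leq \pi^{n/2}/[n/2]!$ and recalling $(2\sqrt\pi)^n=2^n\pi^{n/2}$ then yields exactly \eqref{eq5.3}. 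The lone value $n=1$, where this comparison is slightly too weak, I would settle directly from $\dis \int_\R |q_s^\prime(y)|\,dy=\sqrt{2/(\pi s)}$, which in fact gives an even better constant. In summary, the analysis is routine once one notices that the spatial integral reduces to $1$ so that Lemma \ref{lem5.2} applies verbatim; the real work is the elementary but careful $\Gamma$-versus-factorial bookkeeping that produces the stated upper-bound constant.
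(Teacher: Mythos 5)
Your proof is correct and follows essentially the same route as the paper: pull out $||b||_\infty^n$, dominate each $|q'|$ via \eqref{eq4.11}, collapse the spatial integral by Chapman--Kolmogorov, reduce to $\eta_n(t)=v_n\,t^{n/2}$ through Lemma \ref{lem5.2}, and compare $v_n$ with $\pi^{n/2}/[n/2]!$. Your separate treatment of $n=1$ is in fact a point where you are more careful than the paper, whose stated inequality $v_{2q+1}\leq \pi^q/q!$ fails at $q=0$ (since $v_1=2>1$), even though the proposition itself still holds there, as your direct computation of $\int_\R|q_s^\prime(y)|\,dy$ shows.
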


\begin{proof} First by expression \eqref{eq5.2},   it is easy to see that
\begin{equation*}
|I_{n,1}(t)|\leq ||b||_\infty^n\, \int_0^t\! ds_1\!\int_{s_1}^t\!ds_2\cdots ds_{n}\!\int_{\R^n} \,
|q_{s_1}^{\prime}q_{s_2-s_1}^{\prime}\cdots  q_{s_n-s_{n-1}}^{\prime}| dy_1\cdots dy_n.
\end{equation*}

Using Inequality \eqref{eq4.11},  we get
\begin{equation*}
|I_{n,1}(t)|\leq (\frac{2||b||_\infty}{\sqrt{e}})^n \, \eta_n(t).
\end{equation*}

 Remark that, for $q\in \N$,
\begin{equation*}
v_{2q}=\frac{\pi^q}{q!},\quad v_{2q+1}=\frac{2^{2q+1}\pi^q q!}{(2q+1)!}\leq \frac{\pi^q}{q!}.
\end{equation*}
The result \eqref{eq5.3} follows from \eqref{eq5.5}. $\square$

\end{proof}

\vskip 2mm
We see in previous sections that the passage from $I_2(t)$ to $I_4(t)$ meets a very hard difficulty. Of course,
the same degree of difficulty remains for the passage from $I_n(t)$ to $I_{n+1}(t)$. We state here the result obtained in \cite{Davie}
in the following

\begin{proposition}\label{prop5.3} \cite{Davie}  Let $I_n(t)$ be defined in \eqref{eq5.1}, there is a constant $M>0$ such that
\begin{equation}\label{eq5.9}
|I_n(t)|\leq \frac{M^n}{[n/2]!}\,||b||_\infty^n\, t^{n/2}, \quad t>0.
\end{equation}
\end{proposition}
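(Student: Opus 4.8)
The plan is to obtain \eqref{eq5.9} by expanding the mixed derivative in \eqref{eq5.1} and reducing every resulting piece to the building blocks treated in Sections \ref{sect4}--\ref{sect5}. Since each $y_i$ occurs only in the two consecutive factors $q_{s_i-s_{i-1}}(y_i-y_{i-1})$ and $q_{s_{i+1}-s_i}(y_{i+1}-y_i)$ (and $y_n$ in the last factor alone), the operator $\partial_{y_1}\cdots\partial_{y_n}$ splits the chain product into at most $2^{n-1}$ terms, each, up to an immaterial sign, of the form
\[
\prod_{i=1}^n q^{(a_i)}_{s_i-s_{i-1}}(y_i-y_{i-1}),\qquad a_i\in\{0,1,2\},\quad \sum_{i=1}^n a_i=n,
\]
with $a_1\in\{0,1\}$, $a_n\in\{1,2\}$; the case $n=4$ is displayed explicitly in Section~\ref{sect4}. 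Counting indices shows that such a profile has exactly $k$ entries equal to $2$, $k$ entries equal to $0$, and $n-2k$ entries equal to $1$, for some $0\le k\le[n/2]$. Denoting by $J_{(a)}(t)$ the integral of $b(y_1)\cdots b(y_n)$ against one such term, it suffices to bound $|J_{(a)}(t)|\le C^n\,||b||_\infty^n\,t^{n/2}/[n/2]!$ uniformly in the profile, since then $|I_n(t)|\le 2^{n-1}C^n\,||b||_\infty^n\,t^{n/2}/[n/2]!$ and \eqref{eq5.9} follows with $M=2C$.

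The mechanism responsible for the factorial is a Dirichlet--Liouville identity generalising Lemma \ref{lem5.2}: for exponents $\alpha_i>0$ and $s_0=0$,
\[
\int_{0<s_1<\cdots<s_m<t}\ \prod_{i=1}^m (s_i-s_{i-1})^{\alpha_i-1}\,ds_1\cdots ds_m=\frac{\Gamma(\alpha_1)\cdots\Gamma(\alpha_m)}{\Gamma\!\bigl(1+\alpha_1+\cdots+\alpha_m\bigr)}\ t^{\alpha_1+\cdots+\alpha_m}.
\]
The crucial point is that the denominator depends only on the total exponent: whenever the weights $\alpha_i$ lie in $\{1/2,1\}$ and add up to $n/2$, this integral equals $C^n\,t^{n/2}/\Gamma(1+n/2)$, and $\Gamma(1+n/2)$ is comparable to $[n/2]!$. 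Thus the $[n/2]!$ gain is insensitive to how the total half-order $n/2$ is distributed among first- and second-order factors, which is exactly what keeps the estimate uniform in the profile.

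It remains to bring each $J_{(a)}(t)$ into this Dirichlet form. For the unique profile $a\equiv1$ the term is $I_{n,1}(t)$ of \eqref{eq5.2}: bounding each factor by \eqref{eq4.11}, integrating out the space variables and applying Lemma \ref{lem5.2} gives \eqref{eq5.3}, i.e. all weights $\alpha_i=1/2$. For a profile with $k\ge1$ second derivatives I would treat each factor $q''_{s_i-s_{i-1}}$ through the heat equation $q''_t=2\dot q_t$ followed by an integration by parts in time, exactly as in the passages producing \eqref{eq4.9} and \eqref{eq4.15}: this collapses the offending $s_i$-integration into boundary values and converts the non-integrable $1/(s_i-s_{i-1})$ singularity into an ordinary kernel (weight $1$) together with first-order ($1/\sqrt{\cdot}$, weight $1/2$) singularities downstream. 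When a second-order factor sits next to first-order ones this is the pleasant reduction met for $J_6$ in \eqref{eq4.4}; when two second derivatives are insulated by a plain kernel, i.e. a block $q''\,q\,q''$ as in $J_5$ of \eqref{eq4.3}, the reduction regenerates a second derivative (compare \eqref{eq4.17}) and I would instead invoke Lemma 3.9 of \cite{Proske5}, used already in Proposition \ref{prop4.2}, to absorb the residual double singularity. In every case the outcome is an ordered-time integral with weights in $\{1/2,1\}$ summing to $n/2$, to which the Dirichlet identity applies.

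The main obstacle is the uniformity in $n$. Each heat-equation step spawns several boundary and coupling terms, and a single profile may contain many mutually insulated second derivatives; carrying out the reduction for all of them while checking that the total weight stays equal to $n/2$ and that no factor $1/[n/2]!$ is lost is a substantial bookkeeping problem. Worse, the hardest blocks force repeated use of Lemma 3.9 of \cite{Proske5}, and one needs a version of that estimate whose constant is controlled uniformly as the blocks are nested; as recorded in Remark \ref{remark4.1}, already a single application rests on a very intricate argument. It is precisely the $n$-uniform control of these insulated second-derivative configurations that constitutes the hard core of \eqref{eq5.9}, and this is the part handled by the original, notoriously delicate analysis of \cite{Davie}.
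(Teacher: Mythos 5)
The paper does not actually prove Proposition \ref{prop5.3}: it is quoted from Proposition 2.2 of \cite{Davie}, and Sections \ref{sect3}--\ref{sect5} only work out the cases $n=2,3,4$ and the single ``all first derivatives'' profile $I_{n,1}(t)$, precisely in order to illustrate why the general case is hard. So there is no in-paper argument to compare yours against; the relevant question is whether your sketch would close the gap on its own, and it does not.

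Your structural analysis is sound: the expansion of $\partial_{y_1}\cdots\partial_{y_n}$ into $2^{n-1}$ profiles with $a_i\in\{0,1,2\}$ and $\sum_i a_i=n$ matches \eqref{eq4.1}, and the Dirichlet--Liouville identity correctly explains where a factor $1/\Gamma(1+n/2)\sim 1/[n/2]!$ would come from \emph{if} every profile could be reduced to an ordered-time integral with weights in $\{1/2,1\}$ summing to $n/2$. But that reduction is exactly the unproved step. The heat-equation manipulations \eqref{eq4.9} and \eqref{eq4.15} do not simply trade a $q''$ factor for boundary data: they produce terms in which a time variable collapses (a factor $b^2(y_i)$ appears and one integration disappears) and terms in which a fresh second derivative reappears one slot further along the chain (compare \eqref{eq4.17}), so the profile is not stable under the reduction and the weight bookkeeping you describe is asserted rather than verified. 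For a profile containing many mutually insulated blocks of the form $q''\,q\cdots q\,q''$ one would have to iterate Lemma 3.9 of \cite{Proske5} in a nested fashion with a constant that grows at most geometrically in the number of blocks; nothing in Proposition \ref{prop4.2} or in the cited lemma supplies such uniformity, and as you yourself concede this is the hard core of the estimate. As written, the proposal is a plausible programme whose only genuinely difficult point is delegated back to \cite{Davie}; it is not a proof.
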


\section{Solutions of SDE in Sobolev spaces}\label{sect6}

During last two decades, a huge number of papers have been published on the topic of SDE with irregular coefficients, see for instance \cite{BassC, KrylovR, ZhangX2,ZhangX3};
 for some very new developments, see for example \cite {BeckFGM, WeiLW, Krylov1, Krylov2, RocknerZ}.
\vskip 2mm

Consider first the case where $b$ is in $C_b^1(\R)$ and $(X_t(x))_{t\geq 0}$ the solution to SDE \eqref{eq1.1}. Let
\begin{equation*}
N_t = \exp\Bigl( -\int_0^t b(X_s(x))dW_s - \frac{1}{2}\int_0^t b^2(X_s(x))ds\Bigr).
\end{equation*}

Then by Girsanov theorem, under the probability measure $N_T\P$, $(X_t(x))_{t\geq 0}$ is a standard Brownian motion.
For $p\geq 1$, we have, for $t\in [0,T]$,
\begin{equation*}
\begin{split}
&\E\Bigl( e^{p\int_0^t b^\prime(X_s(x))ds} \Bigr)
=\E\Bigl( e^{p\int_0^t b^\prime(X_s(x))ds}\,  N_T^{1/2} N_T^{-1/2}\Bigr)\\
&\leq \Bigl[ \E\Bigl( e^{2p\int_0^t b^\prime(X_s(x))ds}\,  N_T \Bigr)\Bigr]^{1/2}\, \sqrt{\E(N_T^{-1})}.
\end{split}
\end{equation*}
The term $\dis  \sqrt{\E(N_T^{-1})}$ is dominated by $\dis e^{\frac{1}{2}||b||_\infty^2 T}$. Therefore

\begin{equation}\label{eq6.1}
\E\Bigl( e^{p\int_0^t b^\prime(X_s(x))ds} \Bigr)\leq e^{\frac{1}{2}||b||_\infty^2 T}\, \Bigl[\E \Bigl( e^{2p\int_0^t b^\prime(W_s)ds} \Bigr)\Bigr]^{1/2}.
\end{equation}

\begin{lemma}\label{lemma6.1} For any positive real number $x$,
\begin{equation}\label{eq6.2}
\sum_{n=0}^{+\infty} \frac{x^n}{[n/2]!}\leq (1+x)\, e^{x^2}.
\end{equation}
\end{lemma}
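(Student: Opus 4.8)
The plan is to split the sum according to the parity of the index $n$, since the floor function $[n/2]$ behaves differently on even and odd integers but is constant on each parity class of width two. Writing $n=2q$ for the even terms and $n=2q+1$ for the odd terms, I would use $[2q/2]=q$ and $[(2q+1)/2]=q$, so that both the even term $x^{2q}/q!$ and the odd term $x^{2q+1}/q!$ carry the same factorial $q!$ in the denominator. This is the key observation that makes the two subseries collapse onto the exponential series for $e^{x^2}$.

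Concretely, I would write
\begin{equation*}
\sum_{n=0}^{+\infty}\frac{x^n}{[n/2]!}
=\sum_{q=0}^{+\infty}\frac{x^{2q}}{q!}+\sum_{q=0}^{+\infty}\frac{x^{2q+1}}{q!}
=\sum_{q=0}^{+\infty}\frac{(x^2)^q}{q!}+x\sum_{q=0}^{+\infty}\frac{(x^2)^q}{q!}.
\end{equation*}
Each of the two resulting series is exactly the Taylor expansion of $e^{x^2}$, so the right-hand side equals $e^{x^2}+x\,e^{x^2}=(1+x)\,e^{x^2}$. Since $x>0$, every term is nonnegative and the rearrangement of the absolutely convergent series into even and odd parts is fully justified, so in fact equality holds and the claimed inequality follows a fortiori.

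I do not anticipate a genuine obstacle here: the statement is an identity dressed as an inequality, and the only point requiring a word of care is the legitimacy of regrouping the series by parity, which is immediate from absolute convergence (indeed from nonnegativity of all terms for $x>0$). If one prefers to keep the inequality strict in spirit rather than prove equality, one can simply note that the partial sums are increasing and bounded by $(1+x)e^{x^2}$; but the cleanest route is the direct identification of the two subseries with $e^{x^2}$ as above.
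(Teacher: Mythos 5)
Your proof is correct and follows essentially the same route as the paper: split the series by parity of $n$, use $[2q/2]=[(2q+1)/2]=q$, and identify each subseries with $e^{x^2}$. Your only (valid) refinement is the observation that the bound is in fact an equality, whereas the paper writes a superfluous $\leq$ for the odd terms; nothing further is needed.
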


\begin{proof} For even integer $n=2q$, $\dis  \frac{x^n}{[n/2]!}=\frac{(x^2)^q}{q!}$; for odd integer $n=2q+1$,
$\dis \frac{x^n}{[n/2]!} \leq x\frac{(x^2)^q}{q!}$. Therefore summing above terms yields \eqref{eq6.2}.
\end{proof}

\begin{theorem}\label{th6.2} Let $p\geq 1$ and for $t\in [0,T]$, the following moment estimate holds
\begin{equation}\label{eq6.3}
\E\bigl( |X_t^\prime(x)|^p\bigr)
\leq (1+2pM\sqrt{t}\,||b||_\infty)\,\exp\Bigl( (1+2p^2M^2)T||b||_\infty^2\Bigr),
\end{equation}
where $M$ is the constant involving in \eqref{eq5.9}.
\end{theorem}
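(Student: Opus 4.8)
The starting point is the identity \eqref{eq1.6}, namely $X_t^\prime(x)=\exp\bigl(\int_0^t b^\prime(X_s(x))\,ds\bigr)$, which holds in dimension one. Taking the $p$-th power simply multiplies the exponent by $p$, so
\begin{equation*}
\E\bigl(|X_t^\prime(x)|^p\bigr)=\E\Bigl(e^{p\int_0^t b^\prime(X_s(x))\,ds}\Bigr).
\end{equation*}
First I would invoke the Girsanov bound \eqref{eq6.1} already derived just above the statement, which removes the dependence of the law of the process on $b$ at the cost of a factor $e^{\frac12\|b\|_\infty^2 T}$ and a doubling of the exponent:
\begin{equation*}
\E\bigl(|X_t^\prime(x)|^p\bigr)\leq e^{\frac12\|b\|_\infty^2 T}\,\Bigl[\E\Bigl(e^{2p\int_0^t b^\prime(W_s)\,ds}\Bigr)\Bigr]^{1/2}.
\end{equation*}
This reduces everything to controlling a single exponential moment of $\int_0^t b^\prime(W_s)\,ds$ against the \emph{Brownian} motion, where the sharp combinatorial estimates of Section \ref{sect5} apply.

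Next I would expand the remaining exponential as a power series and interchange expectation and summation, so that the $n$-th term is exactly $\frac{(2p)^n}{n!}\,\E\bigl[(\int_0^t b^\prime(W_s)\,ds)^n\bigr]$. By Proposition \ref{prop2.2} this equals $\frac{(2p)^n}{n!}\cdot n!\,I_n(t)$ up to the simplex symmetrization, i.e.\ the series becomes $\sum_{n\geq 0}(2p)^n I_n(t)$, since $\E[(\int_0^t b^\prime(W_s)\,ds)^n]=n!\,I_n(t)$ because $I_n$ is the integral over the ordered simplex $\Delta_n$ while the full integral is over $[0,t]^n$. Applying the master estimate \eqref{eq5.9} from Proposition \ref{prop5.3}, each term is bounded by $(2p)^n\frac{M^n}{[n/2]!}\|b\|_\infty^n t^{n/2}$, so with $x=2pM\sqrt{t}\,\|b\|_\infty$ the whole series is dominated by $\sum_{n\geq 0}\frac{x^n}{[n/2]!}$.

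The series $\sum_{n\geq 0}\frac{x^n}{[n/2]!}$ is precisely what Lemma \ref{lemma6.1} controls, giving $(1+x)\,e^{x^2}$. Substituting $x=2pM\sqrt{t}\,\|b\|_\infty$ yields $x^2=4p^2M^2 t\,\|b\|_\infty^2\leq 4p^2M^2 T\,\|b\|_\infty^2$, whence
\begin{equation*}
\E\Bigl(e^{2p\int_0^t b^\prime(W_s)\,ds}\Bigr)\leq (1+2pM\sqrt{t}\,\|b\|_\infty)\,e^{4p^2M^2 T\|b\|_\infty^2}.
\end{equation*}
Taking the square root halves the exponent to $2p^2M^2T\|b\|_\infty^2$ (and leaves the linear prefactor, bounded by the same $1+2pM\sqrt t\,\|b\|_\infty$ after using $\sqrt{1+x}\leq 1+x$), and multiplying by the Girsanov factor $e^{\frac12\|b\|_\infty^2T}$ merges the exponents into $(1+2p^2M^2)T\|b\|_\infty^2$, producing exactly \eqref{eq6.3}.

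\textbf{Main obstacle.} The delicate points are analytic rather than conceptual: one must justify the termwise interchange of expectation and summation (absolute convergence of $\sum(2p)^n\E|\,\cdot\,|^n$, which the factorial decay in \eqref{eq5.9} guarantees), and one must handle the fact that \eqref{eq1.6}, hence the whole argument, relies on $b\in C_b^1(\R)$; the estimate \eqref{eq6.3} depends only on $\|b\|_\infty$, so the genuinely hard part is the approximation step extending the bound to merely bounded Borel $b$ by mollification and passage to the limit, using that the right-hand side of \eqref{eq6.3} is uniform in the $C^1$-regularity of the approximants.
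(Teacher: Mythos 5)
Your proposal is correct and follows essentially the same route as the paper: the Girsanov bound \eqref{eq6.1}, termwise expansion of the exponential moment into the symmetrized integrals $I_n(t)$, Davie's estimate \eqref{eq5.9}, and the summation Lemma \ref{lemma6.1}, with the same final bookkeeping of exponents. The only difference is that you flag the extension to merely bounded Borel $b$ as part of this proof, whereas the paper states Theorem \ref{th6.2} under the standing assumption $b\in C_b^1(\R)$ and defers the mollification argument to Proposition \ref{prop6.3}.
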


\begin{proof} We have, according to Definition of $I_n(t)$ in \eqref{eq5.1},
\begin{equation*}
\frac{1}{n!} \E\Bigl[ (2p)^n \Bigl(\int_0^t b^\prime(W_s)ds\Bigr)^n\Bigr]
\leq (2p)^n I_n(t),
\end{equation*}
which is dominated, using \eqref{eq5.9}, by
\begin{equation*}
\frac{(2pM\sqrt{t}\, ||b||_\infty)^n}{[n/2]!}.
\end{equation*}

Therefore
\begin{equation*}
\E \Bigl( e^{2p\int_0^t b^\prime(W_s)ds} \Bigr)
\leq \sum_{n=0}^{+\infty} \frac{(2pM\sqrt{t}\, ||b||_\infty)^n}{[n/2]!},
\end{equation*}
the result \eqref{eq6.3} follows by combining \eqref{eq6.1} and \eqref{eq6.2}.
\end{proof}

\vskip 2mm

\begin{proposition}\label{prop6.3}  Let  $b$ be a  bounded function on $\R$ and $X_t$ the strong solution to the SDE
\begin{equation}\label{eq6.0}
dX_t(x)=dW_t + b\bigl(X_t(x)\bigr)\, dt,\quad X_0(x)=x.
\end{equation}
Then for each $t\in [0,T]$ and $R>0$, almost surely $x\ra X_t(x)$ is in the Sobolev space $W_1^p(B_R)$, where $B_R=]-R;R[$
and $p\in ]1,\infty[$.
\end{proposition}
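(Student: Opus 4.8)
The plan is to approximate the merely bounded Borel drift $b$ by smooth ones, to apply the derivative estimate of Theorem \ref{th6.2} \emph{uniformly} along the approximation, and then to pass to the limit by weak compactness in the reflexive space $L^p(\Omega;W_1^p(B_R))$.

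First I would mollify: let $\rho_n$ be a standard mollifier and $b_n=b*\rho_n$, so that $b_n\in C_b^\infty(\R)\subset C_b^1(\R)$, $||b_n||_\infty\leq ||b||_\infty$ for every $n$, and $b_n\to b$ almost everywhere. Let $X_t^n(x)$ be the strong solution of \eqref{eq6.0} with $b$ replaced by $b_n$. Since $b_n\in C_b^1$, the classical theory recalled in the introduction shows that $x\mapsto X_t^n(x)$ is $C^1$, with space derivative given by \eqref{eq1.6} for $b_n$; moreover Theorem \ref{th6.2} applies with the \emph{same} universal constant $M$ of \eqref{eq5.9}, so that
\[
\sup_n\,\sup_{x\in\R}\,\E\bigl(|(X_t^n)'(x)|^p\bigr)\leq (1+2pM\sqrt{t}\,||b||_\infty)\,\exp\bigl((1+2p^2M^2)\,T\,||b||_\infty^2\bigr)=:C_0,
\]
a bound independent of $n$ and of $x$ (recall that \eqref{eq5.9} depends on $b$ only through $||b||_\infty$, hence is unaffected by translating the starting point $x$).

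Integrating over $B_R$ by Fubini gives $\sup_n\E\int_{B_R}|(X_t^n)'(x)|^p\,dx\leq 2R\,C_0$; together with the elementary bound $|X_t^n(x)|\leq |x|+|W_t|+t\,||b||_\infty$, which controls $\sup_n\E\int_{B_R}|X_t^n(x)|^p\,dx$, this shows that $(X_t^n)_n$ is bounded in $L^p(\Omega;W_1^p(B_R))$. As $1<p<\infty$, this space is reflexive, so some subsequence $X_t^{n_k}$ converges weakly in it to a limit $Y\in L^p(\Omega;W_1^p(B_R))$. It remains to identify $Y$ with $X_t$. For this I would invoke the stability of the strong solution under mollification of the drift, which is part of the well-posedness theory for \eqref{eq1.1} (see \cite{Veretennikov,Zvonkin}): $X_t^n(x)\to X_t(x)$ in $L^p(\Omega)$ for each $x$. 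Combined with the uniform bound above, Vitali's theorem upgrades this to $X_t^n\to X_t$ in $L^p(\Omega\times B_R)$, and since weak limits there are unique, $Y=X_t$. Hence $\E\,||X_t||_{W_1^p(B_R)}^p<\infty$, and in particular $x\mapsto X_t(x)$ belongs to $W_1^p(B_R)$ almost surely.

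The main obstacle is precisely this identification step, and more concretely the stability $X_t^n\to X_t$: because $b$ is only bounded and Borel, this convergence is not elementary and genuinely relies on strong well-posedness (for instance through Krylov estimates or a Zvonkin-type transformation). An equivalent, more hands-on way to carry out the identification is to pass to the limit in the weak-derivative identity
\[
\int_{B_R} X_t^n(x)\,\varphi'(x)\,dx = -\int_{B_R} (X_t^n)'(x)\,\varphi(x)\,dx,\qquad \varphi\in C_c^\infty(B_R),
\]
using the weak convergence of $(X_t^{n_k})'$ on the right together with any convergence of $X_t^n$ to $X_t$ strong enough (e.g. in $L^1(\Omega\times B_R)$) to pass the left-hand side to the limit; this exhibits the weak limit of the derivatives as the distributional space derivative of $X_t$ and thereby closes the argument.
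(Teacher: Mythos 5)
Your proof has the same skeleton as the paper's: mollify $b$ so that $||b_n||_\infty\leq||b||_\infty$, apply Theorem \ref{th6.2} uniformly in $n$, deduce boundedness of the approximating solutions in the reflexive space $L^p(\Omega, W_1^p(B_R))$, extract a weak limit, and identify it with $X_t$. The only genuine divergence is the identification step, which is exactly the point you flag as the main obstacle. You assert $X_t^n(x)\to X_t(x)$ in $L^p(\Omega)$ for each $x$ as ``part of the well-posedness theory'' of \cite{Veretennikov, Zvonkin}; that stability under mollification of a merely bounded Borel drift is true but is not contained in those references and needs its own proof (Krylov estimates, a Zvonkin transformation, or the Malliavin-calculus compactness argument of \cite{MeyerP}). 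The paper sidesteps this by invoking only the weaker statement, taken from \cite{MeyerP}, that the mollified solutions $\tilde X_t^n$ converge to $X_t$ \emph{weakly} in $L^p(\Omega, L^p(B_R))$ along a subsequence, and then upgrading to strong convergence of the Ces\`aro means $X_t^n=(\tilde X_t^1+\cdots+\tilde X_t^n)/n$ via the Banach--Saks theorem; since the uniform derivative bound \eqref{eq6.4} is preserved under convex combinations, the rest of the argument (weak compactness in $L^p(\Omega, W_1^p(B_R))$, identification of limits, lower semicontinuity of the norm) is run on these means. So your route is shorter but rests on a stronger unproved input; replacing your stability claim by the weak convergence from \cite{MeyerP} and inserting the Banach--Saks/convexity step recovers the paper's proof. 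Your closing alternative --- passing to the limit in the identity $\int_{B_R}X_t^n\varphi'\,dx=-\int_{B_R}(X_t^n)'\varphi\,dx$ --- correctly exhibits the weak limit of the derivatives as the distributional derivative of $X_t$, but it still presupposes some convergence of $X_t^n$ to $X_t$, so it does not remove the obstacle, only relocates it.
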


\begin{proof}
Take a smooth non negative function $\varphi\in C_c^\infty(\R)$
with support included in $[-1,1]$ such that $\int_\R \varphi(x)dx=1$. For $n\geq 1$, define $\dis  \varphi_n(x)=n\varphi(nx)$
and $b_n=b\ast\varphi_n$ where $\ast$ is the convolution product; we have $||b_n||_\infty\leq ||b||_\infty$. Consider SDE
\begin{equation*}
d\tilde X_t^n(x)= dW_t + b_n(\tilde X_t^n(x)),\quad \tilde X_0^n(x)=x.
\end{equation*}


By Theorem \ref{th6.2}, there is a constant $C(p, T,||b||_\infty)$ dependent of $T$ and the bound $||b||_\infty$ such that for $t\in [0,T]$ and $n\geq 1$,
\begin{equation}\label{eq6.4}
\sup_{x\in\R}\sup_{n\geq 1}\E\Bigl( |(\tilde X_t^n(x))^\prime|^p\Bigr)\leq C(p,T,||b||_\infty).
\end{equation}

We have
\begin{equation*}
\tilde X_t^n(x)=x+W_t+\int_0^t b_n\bigl(\tilde X_s(x)\bigr)\,ds.
\end{equation*}
Then, for $p>1$, there is a constant $C_p>0$ such that
\begin{equation*}
|\tilde X_t^n(x)|^p \leq C_p\,\Bigl( |x|^p+ |W_t|^p+ ||b||_\infty^p t^p\Bigr).
\end{equation*}
Obviously
\begin{equation*}
\E\Bigl(\sup_{|x|\leq R}|\tilde X_t^n(x)|^p\Bigr) \leq C_p\,\Bigl( |x|^p+ \E\bigl(|W_t|^p\bigr)+ ||b||_\infty^p t^p\Bigr),
\end{equation*}
which implies
\begin{equation}\label{eq6.4.1}
\sup_{n\geq 1}\E\Bigl(\int_{B_R}|\tilde X_t^n(x)|^p\,dx\Bigr) \leq 2R C_p\,\Bigl( |x|^p+ \E\bigl(|W_t|^p\bigr)+ ||b||_\infty^p t^p\Bigr).
\end{equation}

 This means that the sequence $\{\tilde X_t; n\geq 1\}$ is bounded in $\dis L^p(\Omega, L^p(B_R))$.
Following the paper \cite{MeyerP},  $X_t$,  the solution to
SDE \eqref{eq6.0} is a cluster:
more precisely, up to a subsequence, $\tilde X_t^n$ converges to $X_t$ weakly in $\dis L^p(\Omega, L^p(B_R))$.
By Banach-Sacks theorem, up to a subsequence,
\begin{equation*}
\frac{\tilde X_t^1+\cdots + \tilde X_t^n}{n}\ra  X_t
\end{equation*}
strongly in $\dis L^p\bigl(\Omega, L^p(B_R)\bigr)$.
 Let
\begin{equation}\label{eq6.4.2}
 X_t^n=\frac{\tilde X_t^1+\cdots + \tilde X_t^n}{n}.
 \end{equation}

 Then $X_t^n$ converges to $X_t$ in $\dis L^p\bigl(\Omega, L^p(B_R)\bigr)$.
By \eqref{eq6.4.1} and convexity, we get for any $R>0$ and $n\geq 1$,
\begin{equation}\label{eq6.5}
\Bigl[\E\Bigl(\int_{B_R} |(X_t^n)^\prime|^p)\, dx\Bigr)\Bigr]^{1/p}\leq \bigl(2C(p,T,||b||_\infty)\,R\bigr)^{1/p}.
\end{equation}
 Hence the sequence $\{X_t^n;\ n\geq 1\}$ is bounded in
\begin{equation*}
L^p(\Omega, W_1^p(B_R)).
\end{equation*}
 Up to a subsequence, $X_t^n$ converges to $Y_t\in L^p(\Omega, W_1^p(B_R))$
weakly as $n\ra +\infty$. More precisely, for any bounded random variable $\xi$ and $g\in W_1^q(B_R)$, we have
\begin{equation*}
\int_\Omega \xi \, \int_{B_R} \Bigl( X_t^n g+(X_t^n)^\prime g^\prime\Bigr)\, dx\, d\P
\ra \int_\Omega \xi \, \int_{B_R} \Bigl( X_t g+(X_t)^\prime g^\prime\Bigr)\, dx\, d\P.
\end{equation*}

Using Banach Sachs theorem, up to a subsequence again,
\begin{equation*}
\frac{X_t^1+\cdot + X_t^n}{n}
\end{equation*}
converges to $Y_t$ strongly in $\dis  L^p(\Omega, W_1^p(B_R))$. Since the sequence $(X_t^n)$ converges
to $X_t$ in $\dis L^p(\Omega, L^p(B_R))$, by the identification of limits, we get $Y_t=X_t$ in  $\dis L^p(\Omega, L^p(B_R))$;
therefore $X_t\in L^p\bigl(\Omega, W_1^p(B_R)\bigr)$. Since the norm is lower semi-continuous for the weak convergence,
letting $n\ra +\infty$ in \eqref{eq6.5}, we get

\begin{equation*}\label{eq6.6}
\E\Bigl(\int_{B_R} |(X_t)^\prime|^p)\, dx\Bigr)\leq 2C(p,T,||b||_\infty)\,R.
\end{equation*}
It follows  that $X_t\in W_1^p(B_R)$ for each $t\in [0,T]$, almost surely.
\end{proof}

\begin{remark} In an early work \cite{PilipenkoA}, Sobolev regularity for stochastic flows was obtained under some weak derivative conditions
on the drift. Such result was also obtained in \cite{Krylov1} under the condition that the drift is in $L^d$.  We have the following stronger result.
\end{remark}

\begin{theorem}\label{th6.4} There is a version such that $\dis t\ra X_t$ is continuous from $[0,T]$ into $W_1^p(B_R)$.
\end{theorem}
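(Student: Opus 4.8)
The plan is to establish continuity in $t$ for the Sobolev-valued map $t\mapsto X_t$ by combining the uniform $W_1^p$-bound already obtained in Proposition \ref{prop6.3} with a continuity estimate for the approximating smooth flows. First I would recall that for each regularized drift $b_n=b\ast\varphi_n$ the solution $\tilde X_t^n(x)$ is a genuine $C^1$-flow in $x$, and its spatial derivative satisfies the explicit formula \eqref{eq1.6}, namely $(\tilde X_t^n)^\prime(x)=\exp\bigl(\int_0^t b_n^\prime(\tilde X_s^n(x))\,ds\bigr)$. The uniform moment bound \eqref{eq6.4} gives $\sup_n\sup_x\E(|(\tilde X_t^n)^\prime(x)|^p)\leq C(p,T,\|b\|_\infty)$, and this bound is uniform in $t\in[0,T]$ as well. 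The goal is to upgrade the weak-$L^p(\Omega,W_1^p(B_R))$ convergence used before into a statement that the limit $X_t$ inherits a time-continuous version.

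Second, I would produce a quantitative time-increment estimate for the smooth flows. Writing, for $0\leq t_1<t_2\leq T$,
\begin{equation*}
\tilde X_{t_2}^n(x)-\tilde X_{t_1}^n(x)=W_{t_2}-W_{t_1}+\int_{t_1}^{t_2} b_n\bigl(\tilde X_s^n(x)\bigr)\,ds,
\end{equation*}
the $L^p(B_R)$-norm of the drift part is bounded by $(2R)^{1/p}\|b\|_\infty(t_2-t_1)$, while the Brownian part contributes $|W_{t_2}-W_{t_1}|$ uniformly in $x$. For the derivatives, differentiating in $x$ and using \eqref{eq1.6} gives
\begin{equation*}
(\tilde X_{t_2}^n)^\prime(x)-(\tilde X_{t_1}^n)^\prime(x)=(\tilde X_{t_1}^n)^\prime(x)\Bigl(e^{\int_{t_1}^{t_2}b_n^\prime(\tilde X_s^n(x))\,ds}-1\Bigr),
\end{equation*}
so that after taking $L^p(\Omega\times B_R)$ norms and applying Hölder together with the uniform bound on $(\tilde X_{t_1}^n)^\prime$ one controls the increment by a power of $\E\bigl(e^{cp\int_{t_1}^{t_2}b_n^\prime(W_s)\,ds}\bigr)-1$ type quantity; crucially, rerunning the argument of Theorem \ref{th6.2} on the short interval $[t_1,t_2]$ produces a bound that tends to $0$ as $t_2-t_1\to 0$, uniformly in $n$. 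This yields a Hölder-type modulus $\E\bigl(\|\tilde X_{t_2}^n-\tilde X_{t_1}^n\|_{W_1^p(B_R)}^p\bigr)\leq C\,|t_2-t_1|^{\gamma}$ with $C,\gamma$ independent of $n$.

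Third, I would pass to the limit. Since the Cesàro averages $X_t^n$ defined in \eqref{eq6.4.2} converge to $X_t$ in $L^p(\Omega,W_1^p(B_R))$ and convexity preserves the uniform increment estimate, the limit $t\mapsto X_t$ satisfies the same Kolmogorov-type bound $\E\bigl(\|X_{t_2}-X_{t_1}\|_{W_1^p(B_R)}^p\bigr)\leq C\,|t_2-t_1|^{\gamma}$. By the Kolmogorov–Chentsov continuity criterion applied to the Banach-space-valued process $t\mapsto X_t\in W_1^p(B_R)$, provided $\gamma>1$ (or after raising the exponent $p$ suitably to make the effective Hölder exponent exceed $1$), there exists a version with almost surely continuous trajectories in $W_1^p(B_R)$.

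The main obstacle I anticipate is the derivative increment: unlike the values $X_t$, whose increments are controlled directly by the explicit integral equation, the spatial derivatives involve the exponential \eqref{eq1.6}, so one must show that $\E\bigl(e^{cp\int_{t_1}^{t_2}b_n^\prime(W_s)\,ds}\bigr)$ converges to $1$ at a rate in $|t_2-t_1|$ that is uniform in $n$. This is exactly where the short-interval version of estimate \eqref{eq5.9} must be invoked carefully: the bound $|I_n(t_2-t_1)|\leq M^n\|b\|_\infty^n(t_2-t_1)^{n/2}/[n/2]!$ gives, after summation via Lemma \ref{lemma6.1}, a factor of the form $(1+c\sqrt{t_2-t_1})e^{c^2(t_2-t_1)}-1=O(\sqrt{t_2-t_1})$, which furnishes the needed modulus of continuity. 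Verifying that the Girsanov density factor $\sqrt{\E(N_T^{-1})}$ in \eqref{eq6.1} stays bounded and does not destroy this rate, and checking that the resulting Hölder exponent can be pushed above $1$ by increasing $p$, are the technical points requiring care.
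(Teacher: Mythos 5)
Your overall strategy coincides with the paper's: write the derivative increment multiplicatively via \eqref{eq1.6} as $X_{t_2}^\prime(x)-X_{t_1}^\prime(x)=X_{t_1}^\prime(x)\bigl(e^{\int_{t_1}^{t_2}b^\prime(X_s(x))\,ds}-1\bigr)$, combine the uniform moment bound of Theorem \ref{th6.2} with a short-interval estimate coming from Proposition \ref{prop5.3}, transfer the resulting modulus to the regularized solutions and their Ces\`aro averages by convexity, pass to the limit by lower semicontinuity of the norm, and conclude by Kolmogorov's criterion. The treatment of the zeroth-order part and the limit passage are fine.

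The gap is in the key quantitative step, which you flag as ``requiring care'' but do not resolve. Controlling the derivative increment through a quantity of the form $\E\bigl(e^{cp\int_{t_1}^{t_2}b^\prime(W_s)\,ds}\bigr)-1=O(\sqrt{t_2-t_1})$ produces a bound on the $q$-th moment of the increment whose exponent in $|t_2-t_1|$ is a fixed $1/2$ (or a fixed fraction of it) \emph{independently of $q$}: raising $p$ or $q$ does not improve it, since $\E(e^{cp\,\cdot})-1$ remains of order $\sqrt{t_2-t_1}$ for every $p$. Kolmogorov's criterion needs $\E\bigl(||X_{t_2}-X_{t_1}||^q\bigr)\leq C|t_2-t_1|^{1+\beta}$ with $\beta>0$, so an exponent capped at $1/2$ is fatal and your proposed remedy of ``increasing $p$'' cannot work with this bound. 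The paper avoids the problem by linearizing first: with $Y=\int_{t_1}^{t_2}b^\prime(X_s(x))\,ds$ it uses $|e^Y-1|\leq |Y|\,(e^Y+e^{-Y})$ and Cauchy--Schwarz, separating a factor $A_{t_1,t_2}$ whose moments are bounded uniformly by Theorem \ref{th6.2} from the factor $|Y|$, whose $2q$-th moment is $O\bigl((t_2-t_1)^q\bigr)$ by Proposition \ref{prop5.3} applied on $[t_1,t_2]$ after Girsanov. This gives $\sup_x\E\bigl(|X_{t_2}^\prime(x)-X_{t_1}^\prime(x)|^q\bigr)\leq C|t_2-t_1|^{q/2}$, an exponent growing linearly in $q$, and taking $q>2$ makes Kolmogorov applicable. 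Your argument becomes correct once the ``$\E(e^{c\int})-1$'' control is replaced by this linearization.
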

\begin{proof} First for $b\in C_b^1(\R)$, by \eqref{eq1.6}, for $0<t_1<t_2<T$, we have
\begin{equation*}
X_{t_2}^\prime(x)=X_{t_1}^\prime(x)\, \exp\Bigl(\int_{t_1}^{t_2} b^\prime(X_s(x))\, ds\Bigr).
\end{equation*}
Then
\begin{equation}\label{eq6.7}
X_{t_2}^\prime(x) - X_{t_1}^\prime(x)=X_{t_1}^\prime(x)\,\Bigl[ \exp\Bigl(\int_{t_1}^{t_2} b^\prime(X_s(x))\, ds\Bigr) -1\Bigr].
\end{equation}
Using inequality $\dis |e^x-1|\leq |x|\, e^{|x|}\leq |x|\, (e^x+e^{-x})$, Relation \eqref{eq6.7} yields
\begin{equation*}
|X_{t_2}^\prime(x) - X_{t_1}^\prime(x)|\leq |X_{t_1}^\prime(x)|\Big(e^{\int_{t_1}^{t_2} b^\prime(X_s(x))\, ds}
+e^{-\int_{t_1}^{t_2} b^\prime(X_s(x))\, ds}\Bigr)\Bigl|\int_{t_1}^{t_2} b^\prime(X_s(x))\, ds\Bigr|.
\end{equation*}
Set
\begin{equation*}
A_{t_1,t_2}(x,w)=|X_{t_1}^\prime(x)|\Big(e^{\int_{t_1}^{t_2} b^\prime(X_s(x))\, ds}
+e^{-\int_{t_1}^{t_2} b^\prime(X_s(x))\, ds}\Bigr).
\end{equation*}
Using Theorem \ref{th6.2}, there exists a constant $C(T,||b||_\infty, q)$ such that for $t_1<t_2<T$ and $x\in \R$,
\begin{equation*}
\E\Bigl(A_{t_1,t_2}(x)^{2q}\Bigr)\leq C(T, ||b||_\infty,q).
\end{equation*}
On the other hand, by Proposition \ref{prop5.3}, there is a constant $\alpha>0$ such that
\begin{equation*}
\E\Bigl[ \Bigl(\int_{t_1}^{t_2} b^\prime(X_s(x))\, ds\Bigr)^{2q}\Bigr]\leq \alpha\, (t_2-t_1)^q.
\end{equation*}
Since
\begin{equation*}
|X_{t_2}^\prime(x) - X_{t_1}^\prime(x)|=A_{t_1,t_2}(x,w)\, \Bigl|\int_{t_1}^{t_2} b^\prime(X_s(x))\, ds\Bigr|,
\end{equation*}
by Cauchy-Schwarz inequality, we get for some constant $C>0$,
\begin{equation*}
\sup_{x\in \R}\E\bigl( |X_{t_2}^\prime(x)-X_{t_1}^\prime|^q\bigr)\leq C\, |t_2-t_1|^{q/2}.
\end{equation*}
Combining the estimate for $X_{t_2}(x)-X_{t_1}(x)$, we finally get

\begin{equation}\label{eq6.8}
\E\Bigl( ||X_{t_2}-X_{t_1}||_{W_1^p(B_R)}^q\Bigr)\leq C\, |t_2-t_1|^{q/2}.
\end{equation}
Finally for a bounded Borel function $b$, the relation \eqref{eq6.8} is true for $b_n$ with the constant $C$ independent of $n$, that is

\begin{equation*}\label{eq6.9}
\E\Bigl( ||\tilde X_{t_2}^n-\tilde X_{t_1}^n||_{W_1^p(B_R)}^q\Bigr)\leq C\, |t_2-t_1|^{q/2}.
\end{equation*}

According to convexity \eqref{eq6.4.2}, we finally get
\begin{equation*}\label{eq6.9}
\E\Bigl( ||X_{t_2}^n-X_{t_1}^n||_{W_1^p(B_R)}^q\Bigr)\leq C\, |t_2-t_1|^{q/2}.
\end{equation*}

Letting $n\ra +\infty$ and by lower semi-continuity of the norm, Relation \eqref{eq6.8} remains true for $b$ which is only bounded.
Now for $q>2$ and by Kolmogorov modification theorem, we get a version such $t\ra X_t$ is continuous from $[0,T]$ to $W_1^p(B_R)$.
\end{proof}

\section{Malliavin derivatives for SDEs with bounded drift}\label{sect7}

Again for $b\in C_b^1(\R)$, we  now make  the evidence of the dependence of $X_t$ with respect to random $w$
for the solution to SDE \eqref{eq1.1}:
\begin{equation*}
dX_t(w,x)=dW_t + b(X_t(w,x))\,dt,\quad X_0(w,x)=x.
\end{equation*}

In this part, we study the dependence $w\ra X_t(w,x)$. In other words, for fixed $x\in\R$ and $t$, we consider $X_t$ as a functional on
the Wiener space $\Omega=C_0([0,T])$, equipped with Wiener measure $\mu$. The Malliavin derivative for these functionals was considered   in \cite{Proske5}.
We refer to \cite{ImkellerRS} for new developments in this direction.

\vskip 2mm

Recall that $H$ is the Cameron-Martin space of $\Omega$; 
for $h\in H$,  $D_hX_t$ is the Malliavin derivative of $w\ra X_t(w,x)$ for $x$ fixed. Then $D_hX_t$ solves the following ODE

\begin{equation*}
d\bigl(D_hX_t\bigr)=\dot h(t)\,dt+ b^\prime(X_t)\, \bigl(D_hX_t\bigr)\, dt.
\end{equation*}
By Duhamel formula,
\begin{equation}\label{eq7.1}
D_hX_t=\int_0^t X_{t,s}^{\prime}(x)\dot h(s)\, ds,
\end{equation}

where $\dis X_{t,s}^\prime(x)=X_{t}^\prime(x)( X_{s}^\prime(x))^{-1}$ admits the expression, for $s<t$,
\begin{equation*}\label{eq7.2}
X_{t,s}^\prime(x)=\exp\Bigl(\int_s^t b^\prime(X_\theta(x))\, d\theta\Bigr).
\end{equation*}

By \eqref{eq7.1}, we have $\dis \frac{d}{d\tau}\bigl(\nabla X_t\bigr)(\tau)={\bf 1}_{(\tau <t)} X_{t,\tau}^\prime(x)$; therefore
\begin{equation}\label{eq7.3}
|\nabla X_t|_H^2=\int_0^t | X_{t,\tau}^\prime(x)|^2\, d\tau.
\end{equation}

\begin{theorem}\label{th7.1} Let $b$ be a bounded Borel function on $\R$ and $X_t$ solution to SDE \eqref{eq1.1}
for fixed $x$, then for any $p\geq 2$,
\begin{equation*}\label{eq7.4}
X_t\in \D_1^p(\Omega).
\end{equation*}
\end{theorem}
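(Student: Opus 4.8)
The plan is to reduce the bounded Borel case to the smooth case already treated in Theorem \ref{th6.2}, via mollification together with a compactness/closability argument in the Sobolev space $\D_1^p(\Omega)$. First I would take the mollified drifts $b_n=b\ast\varphi_n\in C_b^1(\R)$ used in Proposition \ref{prop6.3}, which satisfy $||b_n||_\infty\leq ||b||_\infty$, and let $\tilde X_t^n$ denote the corresponding smooth solutions. For each $n$, $\tilde X_t^n$ is a smooth Wiener functional, hence $\tilde X_t^n\in\D_1^p(\Omega)$, and its Malliavin gradient is given by the analogue of \eqref{eq7.3},
\begin{equation*}
|\nabla \tilde X_t^n|_H^2=\int_0^t |X_{t,\tau}^{n,\prime}(x)|^2\, d\tau,\qquad X_{t,\tau}^{n,\prime}(x)=\exp\Bigl(\int_\tau^t b_n^\prime(\tilde X_\theta^n(x))\, d\theta\Bigr).
\end{equation*}

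Next I would establish a bound on $\E(|\nabla \tilde X_t^n|_H^p)$ uniform in $n$. Since $p\geq 2$, the map $u\mapsto u^{p/2}$ is convex, so Jensen's inequality applied to the normalized measure $d\tau/t$ on $[0,t]$ gives $|\nabla \tilde X_t^n|_H^p\leq t^{p/2-1}\int_0^t |X_{t,\tau}^{n,\prime}(x)|^p\, d\tau$. Taking expectations and using Fubini reduces matters to bounding $\E(|X_{t,\tau}^{n,\prime}(x)|^p)=\E\bigl(\exp(p\int_\tau^t b_n^\prime(\tilde X_\theta^n)\, d\theta)\bigr)$ uniformly in $n$ and $\tau$. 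By the Markov property at time $\tau$, this exponential moment equals $\E\bigl(\exp(p\int_0^{t-\tau} b_n^\prime(\tilde X_\theta^{n,y})\, d\theta)\bigr)$ evaluated at the starting point $y=\tilde X_\tau^n$, and Theorem \ref{th6.2} bounds the latter by a constant $C(p,T,||b||_\infty)$ depending only on $p$, $T$ and $||b||_\infty$, hence independent of $n$, of $\tau$, and of the starting point. Consequently $\sup_n\E(|\nabla \tilde X_t^n|_H^p)\leq C\,t^{p/2}$. A direct estimate from the integral form of the SDE, as in Proposition \ref{prop6.3}, yields $\sup_n\E(|\tilde X_t^n|^p)<+\infty$, so the sequence $\{\tilde X_t^n\}$ is bounded in $\D_1^p(\Omega)$.

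Finally I would pass to the limit. By the well-posedness theory for bounded measurable drifts \cite{Veretennikov, Zvonkin}, for fixed $x$ the approximations converge, $\tilde X_t^n\ra X_t$ in $L^p(\Omega)$ (strong $L^2$ convergence together with the uniform moment bounds upgrades to $L^p$ by uniform integrability). Since the Malliavin gradient is a closed operator on $L^p(\Omega)$ for $p>1$, the standard closability lemma (see \cite{Nualart}) applies: a sequence bounded in $\D_1^p(\Omega)$ and convergent in $L^p(\Omega)$ has its limit in $\D_1^p(\Omega)$, with the gradients converging weakly in $L^p(\Omega,H)$. This gives $X_t\in\D_1^p(\Omega)$ and completes the proof.

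The main obstacle is the uniform bound of the second paragraph: controlling the $H$-norm of $\nabla \tilde X_t^n$ amounts to controlling exponential moments of $\int_\tau^t b_n^\prime(\tilde X_\theta^n)\, d\theta$ uniformly in the mollification parameter, and this is exactly what the delicate simplex estimates of Sections \ref{sect4}--\ref{sect5}, packaged into Theorem \ref{th6.2}, are designed to deliver. Crucially, one needs the bound to be independent of the starting point so that the Markov-property reduction goes through; this uniformity is precisely what the Girsanov argument underlying \eqref{eq6.1} secures, since the resulting estimate depends on $b$ only through the translation-invariant quantity $||b||_\infty$.
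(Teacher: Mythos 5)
Your proposal follows essentially the same route as the paper: mollify the drift, use the explicit exponential formula for $(\tilde X_{t,\tau}^n)^\prime$ together with Jensen's inequality and the uniform moment bound of Theorem \ref{th6.2} (uniform in $n$ because $||b_n||_\infty\leq ||b||_\infty$) to bound $\E\bigl(|\nabla \tilde X_t^n|_H^p\bigr)$ independently of $n$, and then pass to the limit in $\D_1^p(\Omega)$. Your Markov-property remark, explaining why the bound for $X_{t,\tau}^\prime$ is uniform in $\tau$ and in the starting point, is a useful explicit step that the paper leaves implicit. The one place where you diverge is the closing step: you assert that $\tilde X_t^n\ra X_t$ strongly in $L^p(\Omega)$ ``by well-posedness theory'' and then invoke closability of the gradient. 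That strong convergence is not established in the paper and is not free; the construction of \cite{MeyerP} that the paper relies on only yields weak convergence, which is why the paper passes to the Ces\`aro averages $X_t^n$ of \eqref{eq6.4.2}, extracts a weak limit $Y_t$ in $\D_1^p(\Omega)$, upgrades to strong convergence via Banach--Saks, and identifies $Y_t=X_t$ through the $L^p(\Omega)$ limit. Your argument is repairable without the strong-convergence claim --- boundedness in $\D_1^p$ plus weak $L^p$ convergence suffices, since the graph of $\nabla$ is convex and closed, hence weakly closed --- but as written that sentence is the only unsupported step; either justify the strong convergence or replace it by the weak-compactness/Banach--Saks device the paper uses.
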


\begin{proof}
Consider the regularized sequence $b_n=b\ast \varphi_n$ in SDE \eqref{eq1.1} and $\tilde X_t^n$ the associated solution. Using \eqref{eq7.3}, we have
\begin{equation*}
|\nabla \tilde X_t^n|_H=\Bigl(\int_0^t | (\tilde X_{t,\tau}^n)^\prime(x)|^2\, d\tau\Bigr)^{1/2}.
\end{equation*}
For $p\geq 2$, by Jensen inequality, we have
\begin{equation*}
\E\bigl( |\nabla\tilde X_t^n|_H^p\bigr)
\leq C_{p,T}\int_0^T \E\bigl( |(\tilde X_{t,\tau}^n)^\prime(x)|^p\bigr)d\tau.
\end{equation*}

Since $||b_n||_\infty\leq ||b||_\infty$, by Theorem \ref{th6.2}, there is a constant $C(T,||b||_\infty)$ such that, for all $n$,
\begin{equation*}
\Bigl((\E\bigl( |\nabla \tilde X_t^n|_H^p\bigr)\Bigr)^{1/p}\leq C(T, ||b||_\infty)^{1/p}.
\end{equation*}
Let $X_t^n$ be defined in \eqref{eq6.4.2}. By convexity, we have
\begin{equation*}\label{eq7.5}
\Bigl((\E\bigl( |\nabla X_t^n|_H^p\bigr)\Bigr)^{1/p}\leq C(T, ||b||_\infty)^{1/p}.
\end{equation*}

This means that the sequence $\{X_t^n; n\geq 1\}$ is bounded in $\D_1^p(\Omega)$. Up to a subsequence, there exists $Y_t\in \D_1^p(\Omega)$ such that
$X_t^n$ converges to $Y_t$ weakly. By Banach-Saks theorem, up to a subsequence,
\begin{equation*}
\frac{X_t^1+\cdots + X_t^n}{n}\ra Y_t\quad \textup{in} \quad \D_1^p(\Omega).
\end{equation*}
 Since $X_t^n$ converges to $X_t$ in $L^p(\Omega)$ (see \cite{MeyerP}), we get $X_t=Y_t$; therefore $X_t\in \D_1^p(\Omega)$.
\end{proof}



\begin{thebibliography}{10}

\bibitem{PilipenkoA} Olga V. Aryasova, Andrey Yu. Pilipenko, On differentiability of stochastic flow for multidimensional
SDE with discontinuous drifty, {\it Electron. Commuth. Prob.}, 19 (2014), 1-17.

\bibitem{BassC} R.F. Bass and Zhen-Qing Chen, Brownian motion with singular drift. {\it Ann. Probab. \bf 31} (2003), 791-817.

 \bibitem{BeckFGM}  L. Beck, F. Flandoli, M. Gubinelli,  M. Maurelli, Stochastic ODEs and stochastic linear PDEs with critical drift:,
   regularity, duality and uniqueness.  {\it Electron. J. Probab.} 136(24) (2019): 1-72, https://doi.org/10.1214/19-EJP379.


\bibitem{Davie} A.M. Davie, Uniqueness of solutions of stochastic differential equations, {\it Int. Math. Res. Not}, IMRN (2007), no 24.

\bibitem{Fang1} Shizan Fang, Introduction to Malliavin calculus, {\it Math. Series for Graduate students}, volume 3, Tsinghua University Press, Springer, 2005.

\bibitem{Fang2} Shizan Fang, Notes on Osgood conditions and stochastic differential equations (in Chinese). {\it Sci Sin Math,},  2019, 49:
1-12, doi: 10.1360/N012018-00237.


\bibitem{FangZ} Shizan Fang, Tusheng Zhang, {A study of a class of stochastic differential
equations with non-Lipschitzian coefficients.}, {\it  Probab. Theory Related Fields}, 132 (2005), 356--390.


\bibitem{FlandoliGP} F. Flandoli, M. Gubinelli, E Priola, Well-posedness of the transport equation  by stochastic perturbation,
{\it Invent. Math.} 180 (2010), 1-53.



 \bibitem{ImkellerRS} P. Imkeller, G. dos Reis, W. Salkeld, Differentiability of SDEs with drifts of super-linear growth,
 {\it Electron. J. Probab.} 24 (2019), no. 3, 1-43.


\bibitem{Krylov1} N. V. Krylov, On Stochastic It\^{o} Processes with Drift in Ld. {\it Stochastic Processes and their Applications}, 138(2021): 1-25,
https://doi.org/10.1016/j.spa.2021.04.005.

\bibitem{Krylov2} N.V. Krylov,  On stochastic equations with drift in Ld. {\it Ann. Probab.} 49(5)(2021): 2371 - 2398, https://doi.org/10.1214/21-AOP1510.


\bibitem{KrylovR} N.V. Krylov, M. R\"ockner, {Strong solutions of stochastic equations with singular time dependent drift}, {\it Prob. Th. Related Fields}, 
131 (2005), 154-196.

\bibitem{Kunita} H. Kunita, {\it Stochastic Flows and Stochastic Differentail Equations}. Cambridge University Press, 1990.

\bibitem{Kusuoka} S. Kusuoka, Malliavin calculus revisted, {\it J. Math. Sci. Univ. Tokyo}, 10 (2003), 261?277.

\bibitem{KusuokaS1}  S. Kusuoka,  D. W. Stroock, Applications of Malliavin Calculus II,  {\it J.
Fac. Sci. Univ. Tokyo},  Sect. IA Math. 32(1985), 1?76.

\bibitem{KusuokaS2}  S. Kusuoka, D. W. Stroock, Applications of Malliavin Calculus III, {\it J.
Fac. Sci. Univ. Tokyo},  Sect. IA Math. 34(1987), 391?442.

\bibitem{Malliavin1} P. Malliavin, Stochastic calculus of variation and hypoelliptic opereators, In: {\it Proc. Int. Symp. on SDE}, Kyoto 1976. John Wiley (1978), 195-263.

\bibitem{Malliavin2} P. Malliavin, {\it Stochastic Analysis}, Grundlehren der Math. Wissen., volume 313, Springer, 1997.

\bibitem{MeyerP} Th. Meyer-Brandis, F. Proske, Construction of strong solution of SDEs via Malliavin calculus, {\it J. Funct. Anal.}, 258 (2010), 3922-3953.

\bibitem{Nam} K. Nam, Stochastic Differential Equations with Critical Drifts. {\it Stochastic Processes and Their Applications}, 130 (2020): 5366-5393.

\bibitem{Nualart} D.  Nualart, {\it The Malliavin calculus and related topics}, vol. 1995, Springer (2006). 

\bibitem{RocknerZ} M. R\"ockner, GH, Zhao, SDEs with Critical Time Dependent Drifts: Weak Solutions.
{\it Bernoulli}, 29(1) (2023): 757-784. DOI: 10.3150/22-BEJ1478.



\bibitem{Proske5} O. Menoukeu-Pamen, Th. Meyer-Brandis, T. Nilssen, F. Proske, Tusheng Zhang, A variationam approach to the
construction and Malliavin differentiability of strong solutions of SDEs, {\it Math Ann.} (2013), 761-799.

\bibitem{Veretennikov}  A. Y. Veretennikov,  { On strong solutions and explicit formulas for solutions of stochastic integral
equations}. {\it Math. USSR Sborn}, 39(3): 387-403, 1981.

\bibitem{Watanabe} S. Watanabe, {\it Lecture on stochastic differential equations and Malliavin calculus}, Berlin, Springer (1984), Tata Institute of
fundamental research lectures on Mathematics, no 73.

\bibitem{WeiLW}  Jinlong Wei, Guangying Lv and Jianglun Wu, Stochastic Differential Equations with Critically Irregular Drift Coefficients.
{\it Journal of Differential Equations}, 371 (2023), 1-30.





\bibitem{ZhangX2} Xicheng Zhang,  Stochastic Homeomorphism flows of SDEs with singular drifts and Sobolev diffusion coefficients,
{\it Elect Journal of Probability}, Vol. 16 (2011), 1096-1116.

\bibitem{ZhangX3}   Xicheng Zhang, Stochastic differential equations with Sobolev diffusion and singular drift and applications.
 {\it Ann. Appl. Probab,} 26 (5) (2016), 2697-2732.

 \bibitem{Zvonkin} A. K. Zvonkin, A transformation of the phase space of a diffusion process that will remove the drift,
 {\it Mat. Sb,} 93 (1974), 129-149.


\end{thebibliography}
\end{document}